\newtheorem{theorem}{Theorem}[section]
\newtheorem{lemma}[theorem]{Lemma}
\newtheorem{proposition}[theorem]{Proposition}
\newtheorem{corollary}[theorem]{Corollary}
\newtheorem{conjecture}[theorem]{Conjecture}
\theoremstyle{definition}
\newtheorem{remark}[theorem]{Remark}
\newcommand{\excise}[1]{}
\renewcommand{\dim}{\operatorname{dim}}
\newcommand{\Sym}{\operatorname{Sym}}
\newcommand{\rk}{\operatorname{rk}}
\newcommand{\crk}{\operatorname{crk}}
\renewcommand{\and}{\qquad\text{and}\qquad}
\newcommand{\Ind}{\operatorname{Ind}}
\newcommand{\Hom}{\operatorname{Hom}}
\newcommand{\ch}{\operatorname{ch}}
\newcommand{\Z}{\mathbb{Z}}
\newcommand{\Q}{\mathbb{Q}}
\newcommand{\N}{\mathbb{N}}
\newcommand{\C}{\mathbb{C}}
\renewcommand{\cH}{\mathcal{H}}
\newcommand{\cK}{\mathcal{K}}
\newcommand{\cQ}{\mathcal{Q}}
\renewcommand{\cR}{\mathcal{R}}
\newcommand{\cP}{\mathcal{P}}
\newcommand{\cC}{\mathcal{C}}
\newcommand{\cI}{\mathcal{I}}
\newcommand{\la}{\lambda}
\newcommand{\OS}{OS}
\newcommand{\grRep}{\operatorname{grRep}}
\newcommand{\grVRep}{\operatorname{grVRep}}
\newcommand{\Rep}{\operatorname{Rep}}
\newcommand{\VRep}{\operatorname{VRep}}
\newcommand{\OSc}{H}
\newcommand{\nicktodo}{\todo[inline,color=green!20]}
\begin{document}
\spacing{1.2}
\noindent{\Large\bf The equivariant Kazhdan-Lusztig polynomial of a matroid}\\

\noindent{\bf Katie Gedeon, Nicholas Proudfoot,
%\footnote{Supported by NSF grant DMS-0950383.}, 
and Benjamin Young}\\
Department of Mathematics, University of Oregon,
Eugene, OR 97403\\

% \noindent{\bf Nicholas Proudfoot}\footnote{Supported by NSF grant DMS-0950383.}\\
% Department of Mathematics, University of Oregon,
% Eugene, OR 97403\\

% \noindent{\bf Ben Young}\\
% Department of Mathematics, University of Oregon,
% Eugene, OR 97403\\

{\small
\begin{quote}
\noindent {\em Abstract.}
We define the equivariant Kazhdan-Lusztig polynomial of a matroid equipped with a group of symmetries,
generalizing the nonequivariant case.  We compute this invariant for arbitrary uniform matroids and for braid
matroids of small rank.
\end{quote} }

\section{Introduction}
The Kazhdan-Lusztig polynomial $P_M(t)\in\Z[t]$ of a matroid $M$ was introduced in \cite{EPW}.
In the case where $M$ is realizable by a linear space $V\subset\C^n$, the coefficient of $t^i$ in $P_M(t)$
is equal to the dimension of the intersection cohomology group $I\! H^{2i}(X_V; \C)$, where $X_V$
is the ``reciprocal plane" of $V$ \cite[Proposition 3.12]{EPW}.  In particular, this implies that $P_M(t)\in \N[t]$
whenever $M$ is realizable.  We conjectured \cite[Conjecture 2.3]{EPW} that $P_M(t)\in \N[t]$ for every matroid $M$.
We also gave some computations of $P_M(t)$ for uniform matroids and braid matroids of small rank.
% , but we
% were not able to derive (or even guess!) a formula for $P_M(t)$ in either of these two special cases.

The purpose of this paper is to define a more refined invariant.
Given a matroid $M$ equipped with an action of a finite group $W$, we define the equivariant Kazhdan-Lusztig polynomial
$P_M^W(t)$.  The coefficients of this polynomial are not integers, but rather virtual representations of the group $W$.
If $W$ is the trivial group, the ring of virtual representations of $W$ is $\Z$, and $P_M^W(t)$ is equal to the ordinary polynomial $P_M(t)$.
More generally, the polynomial $P_M(t)$ may be obtained from $P_M^W(t)$ by sending a virtual representation to its dimension.
If $M$ is equivariantly realizable by a linear space $V\subset\C^n$, the coefficient of $t^i$ in $P_M^W(t)$
is equal to the intersection cohomology group $I\! H^{2i}(X_V; \C)$, regarded as a representation of $W$ (Corollary \ref{rep}).
In particular, this implies that the coefficients of $P_M^W(t)$ are honest (rather than virtual) representations of $W$ whenever
$M$ is equivariantly realizable.  We conjecture that this is the case even in the non-realizable case (Conjecture \ref{positivity}).
We compute the coefficients of $P_M^W(t)$ for arbitrary uniform matroids (Theorem \ref{uniform}) and for braid matroids
of small rank (Section \ref{sec:calculations}).\\

It is reasonable to ask why bother with an equivariant version of this invariant, especially since there are
still many things that we do not understand about the nonequivariant version.  
We have four answers to this question, all of which are illustrated by the case of uniform matroids.
To set notation,
let $U_{m,d}$ be the uniform matroid of rank $d$ on a set of $m+d$ elements, which is equipped
with a natural action of the symmetric group $S_{m+d}$.  Let $C_{i,m,d}$ be the coefficient of $t^i$ in the equivariant
Kazhdan-Lusztig polynomial of $U_{m,d}$, and  let $c_{i,m,d} = \dim C_{i,m,d}$ be the coefficient
of $t^i$ in the nonequivariant Kazhdan-Lusztig polynomial.

\begin{itemize}
\item {\bf Nicer formulas:}  Our formula for $C_{i,m,d}$ (Theorem \ref{uniform}) is very simple;
it is a multiplicity-free sum of irreducible representations that are easy to describe.  We could of course
use the hook-length formula for the dimension of an irreducible representation of $S_{m+d}$
to derive a formula for $c_{i,m,d}$, but the resulting formula is messy and unenlightening.
Indeed, we computed a table in the appendix of \cite{EPW} consisting of the numbers $c_{i,m,d}$
for small values of $i$, $m$, and $d$, and at that time we were unable even to guess the general formula.  
It was only by keeping track of the extra structure that we were able to see the essential pattern.
\item {\bf More powerful tools:}  After we figured out the correct statement of Theorem \ref{uniform},
we attempted to prove the formula for $c_{i,m,d}$ directly (without going through Theorem \ref{uniform}), and we
failed.  The Schubert calculus techniques that we employ in the proof of Theorem \ref{uniform}
are considerably more powerful than the tools to which we have access in the nonequivariant setting.
\item {\bf Representation stability:}  The sequence of representations $C_{i,m,d}$ is uniformly
representation stable in the sense of Church and Farb \cite{CF}, which essentially means that
it admits a description that is independent of $d$, provided that $d\geq m+2i$ (Remark \ref{stable}).
This phenomenon cannot be seen by looking at the numbers $c_{i,m,d}$.
\item {\bf Non-realizable examples:}  It is difficult to write down examples of
non-realizable irreducible matroids for which we can compute the Kazhdan-Lusztig polynomial, and therefore
we had no nontrivial checks of our non-negativity conjecture in the non-equivariant setting.
On the other hand, the uniform matroid $U_{m,d}$ is equivariantly non-realizable provided that both $d$ and $m$
are greater than 1.  This means that Theorem \ref{uniform} provides good evidence for Conjecture \ref{positivity},
and therefore by extension for \cite[Conjecture 2.3]{EPW}.
\end{itemize}

The paper is structured as follows.  In Section \ref{sec:def}, we define the equivariant characteristic polynomial
and use it to define the equivariant Kazhdan-Lusztig polynomial.  This section closely mirrors Section 2 of \cite{EPW},
but some of the basic lemmas are much more technical in the equivariant setting.  In particular, Lemma \ref{dual}
is an equivariant version of a well-known statement that is usually proved via M\"obius inversion.  This proof
does not work in the equivariant context (due essentially to the fact that the equivariant analogue of the M\"obius
algebra is not associative), so we needed to find a different approach.

Section \ref{sec:uniform} is devoted to the study of uniform matroids, and in particular the statement
and proof of Theorem \ref{uniform}.  Our main technique is to express everything in terms of generating
functions that encode all three parameters $i$, $m$, and $d$, and then to manipulate our functional equations
until they can be solved using repeated applications of the Pieri rule.  Section \ref{sec:braid} treats the case
of braid matroids.  In this case we are not able to give a general formula for the equivariant Kazhdan-Lusztig
polynomial, but we do derive generating function identities that allow us to compute the polynomial
explicitly in small rank.

Finally, in Section \ref{sec:elc} we introduce the notion of equivariant log concavity, which is a generalization
of the usual notion of log concavity to the equivariant setting.  The statement that the coefficients of the characteristic
polynomial of a matroid form a log concave sequence goes back to the 1960s, and was only recently proved
by Adiprasito, Huh, and Katz \cite{AHK}.  The statement that the coefficients of the Kazhdan-Lusztig polynomial of
a matroid form a log concave sequence was conjectured in \cite[Conjecture 2.5]{EPW}.  Here we make the two
analogous conjectures in the equivariant setting (Conjecture \ref{lc}), and we prove equivariant log concavity
of the characteristic polynomial of a uniform matroid (Proposition \ref{uniform-lc}).  The notion of equivariant
log concavity will be further developed in a future paper.

\vspace{\baselineskip}
\noindent
{\em Acknowledgments:}
The authors are grateful to Max Wakefield for his help in initiating
this project, and to June Huh and David Speyer for helpful conversations.
NP was supported by NSF grants DMS-0950383 and DMS-1565036.  

\section{Definition}\label{sec:def}
Let $M$ be a matroid on the ground set $\cI$, and let $W$ be a finite group acting on $\cI$ and preserving $M$.
We will refer to this collection of data as an {\bf equivariant matroid} $W\curvearrowright M$.
Let $$\grVRep(W) := \VRep(W)\otimes_\Z \Z[t] \and \grRep(W) := \Rep(W)\otimes_\N \N[t].$$
Note that, for any group homomorphism $\varphi:W'\to W$, we obtain ring maps $$\varphi^*:\VRep(W)\to\VRep(W')
\and \varphi^*:\grVRep(W)\to\grVRep(W')$$
taking honest representations to honest representations.

\subsection{The equivariant characteristic polynomial}
Let $\OS^W_{M,i}\in \Rep(W)$ be the degree $i$ part of the Orlik-Solomon algebra of $M$.
We define the {\bf equivariant characteristic polynomial}
$$\OSc^W_M(t) := \sum_{p=0}^{\rk M} (-1)^p t^{\rk M - p} \OS^W_{M,p} \in \grVRep(W).$$
Note that the graded dimension of $H_M^W(t)$ is just the usual characteristic polynomial $\chi_M(t)\in \Z[t]$.
The following lemma is an equivariant version of the statement that $\chi_M(1)=0$ for any matroid $M$ of positive rank.

\begin{lemma}\label{one}
For any equivariant matroid $W\curvearrowright M$ of positive rank, $H_M^W(1) = 0$.
\end{lemma}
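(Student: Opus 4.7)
The plan is to realize $H_M^W(1) = \sum_p (-1)^p \OS^W_{M,p}$ as the equivariant Euler characteristic of an exact complex of $W$-representations. Since $W$ is finite, complete reducibility splits every short exact sequence of finite-dimensional $W$-representations, and an easy telescoping argument then shows that the alternating sum of the terms of any exact complex of such representations vanishes in $\VRep(W)$. So it suffices to equip the Orlik-Solomon algebra with a $W$-equivariant exact differential.

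The natural candidate is the classical Orlik-Solomon boundary map $\partial : \OS^W_{M,p} \to \OS^W_{M,p-1}$, defined on monomials by
\[
\partial(e_{i_1} \wedge \cdots \wedge e_{i_p}) \;=\; \sum_{j=1}^{p} (-1)^{j-1}\, e_{i_1} \wedge \cdots \wedge \widehat{e}_{i_j} \wedge \cdots \wedge e_{i_p}.
\]
This is the restriction of a graded derivation on the ambient exterior algebra satisfying $\partial^2 = 0$; since the Orlik-Solomon ideal is itself generated by elements of the form $\partial(e_C)$ for circuits $C$, this derivation preserves the ideal and descends to $\OS^W_{M,\bullet}$. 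The defining formula is manifestly symmetric in the indices, so $\partial$ commutes with the $W$-action on $\OS^W_{M,\bullet}$, producing a genuine complex in $\Rep(W)$.

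For acyclicity I appeal to the classical contracting homotopy: the positive-rank hypothesis furnishes a non-loop element $e \in \cI$, and one sets $h(x) := e \wedge x$. A direct Leibniz-rule computation, using $\partial(e) = 1$, yields $\partial h + h \partial = \id$ on $\OS^W_{M,\bullet}$, so the complex is null-homotopic and hence exact. Combining this with the equivariance of $\partial$ then gives $H_M^W(1) = 0$ in $\VRep(W)$, as desired.

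The one subtlety worth flagging is that the homotopy $h$ is not $W$-equivariant, because it depends on a choice of $e$. This is harmless here: only $\partial$ needs to be equivariant for the Euler characteristic argument to yield a vanishing in $\VRep(W)$, and acyclicity is a property detected after forgetting the group action. The potential obstacle one might worry about---needing an equivariant splitting of the complex---is handled automatically by complete reducibility for the finite group $W$.
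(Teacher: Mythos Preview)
Your proof is correct and morally the same as the paper's: both exhibit $H_M^W(1)$ as the equivariant Euler characteristic of an acyclic $W$-complex built on the graded pieces of the Orlik--Solomon algebra. The only real difference is which differential is used. The paper takes the degree~$+1$ map given by multiplication by the $W$-invariant element $e = \sum_{i\in\cI} e_i$ and cites \cite[2.1]{YuzBOS} for acyclicity; you take the degree~$-1$ derivation $\partial$ and supply an explicit contracting homotopy $h = e_j\wedge(-)$. These are dual to one another (each serves as a null-homotopy for the other, up to scalar), so the arguments are two faces of the same coin. Your version has the mild advantage of being self-contained, and your remark that the non-equivariance of $h$ is harmless---because only $\partial$ needs to respect $W$, while acyclicity is detected after forgetting the action---is exactly the right justification; the paper's choice of $e=\sum e_i$ sidesteps this issue by making the differential manifestly invariant from the start.
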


\begin{proof}
Let $e = \sum_{i\in\cI} e_i \in \OS_{M,1}$, and consider the complex of $W$-representations
with $i^\text{th}$ term $\OS^W_{M,i}$ and with differential given by multiplication by $e$.
Then $H_M^W(1)$ is equal to the Euler characteristic of this complex, which is equal to the Euler
characteristic of its homology.  But the homology is zero provided that $M$ has positive rank \cite[2.1]{YuzBOS}.
\end{proof}

Let $L$ be the lattice of flats of $M$.
Given a flat $F\in L$, let $W_F\subset W$ be the stabilizer of $F$.
For any pair of flats $F,G\in L$, let $W_{FG} := W_F\cap W_G$.
Let $M_F$ be the {\bf localization} of $M$ at $F$; this is the matroid on the ground set $F$ whose lattice of flats is isomorphic
to $L_F := \{G\in L\mid G\leq F\}$.  Dually, let $M^F$ be the {\bf restriction} of $M$ to $F$; this is the matroid on the ground
set $\mathcal{I}\smallsetminus F$ whose lattice of flats is isomorphic
to $L^F := \{G\in L\mid G\geq F\}$.  The action of $W$ on $M$ induces an action of $W_F$ on both $M_F$ and $M^F$.

\begin{lemma}\label{Brieskorn}
For any equivariant matroid $W\curvearrowright M$,
\begin{eqnarray*}H_M^W(t) &=& \sum_{[F]\in L/W}(-1)^{\rk F} t^{\crk F}\Ind_{W_F}^W\!\left(\OS^{W_F}_{M_F,\,\rk M_F}\right)\\
&=& \sum_{F\in L}\frac{|W_F|}{|W|}(-1)^{\rk F} t^{\crk F}\Ind_{W_F}^W\!\left(\OS^{W_F}_{M_F,\,\rk M_F}\right).\end{eqnarray*}
\end{lemma}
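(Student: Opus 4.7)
The plan is to invoke a $W$-equivariant version of Brieskorn's decomposition of the Orlik-Solomon algebra and then reorganize the resulting sum.

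Brieskorn's theorem provides a natural direct-sum decomposition
\[
\OS_{M,p} \;=\; \bigoplus_{F \in L,\ \rk F = p} \OS_{M_F,\,\rk M_F},
\]
with each summand the top-degree component of the Orlik-Solomon algebra of the localization $M_F$. The inclusions of the summands are natural with respect to matroid isomorphism, so this decomposition is automatically equivariant for any group of matroid automorphisms; in particular, $W$ permutes the summands according to its action on $L$. Collecting terms by $W$-orbit and applying the standard equivalence between $W$-equivariant direct sums over transitive $W$-sets and induced representations, one obtains
\[
\OS_{M,p}^W \;\cong\; \bigoplus_{[F] \in L/W,\ \rk F = p} \Ind_{W_F}^W\!\left(\OS_{M_F,\,\rk M_F}^{W_F}\right).
\]

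Substituting this identity into the definition $H_M^W(t) = \sum_p (-1)^p t^{\rk M - p}\,\OS_{M,p}^W$, and using the fact that $\crk F = \rk M - \rk F$ for $F$ of rank $p$, yields the first claimed expression immediately. The second expression is a reindexing: by orbit-stabilizer the orbit $[F]$ has size $|W|/|W_F|$, and for any $F' = wF$ in the same orbit the stabilizer $W_{F'}$ and localization $M_{F'}$ are the $w$-conjugates of $W_F$ and $M_F$, so the induced representation $\Ind_{W_{F'}}^W \OS_{M_{F'},\rk M_{F'}}^{W_{F'}}$ is canonically isomorphic to $\Ind_{W_F}^W \OS_{M_F,\rk M_F}^{W_F}$ as a $W$-representation. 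Weighting each such term by $|W_F|/|W|$ thus averages over the orbit, turning the sum over $L/W$ into a sum over all $F \in L$.

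The one point requiring care is the $W$-equivariance of the Brieskorn decomposition itself; the cleanest justification is to note that the embedding $\OS_{M_F,\rk M_F}\hookrightarrow \OS_{M,\rk F}$ is described purely in terms of nbc bases depending only on the matroid structure on $F$, and these bases are permuted by $W$ exactly as the flats are. Everything else is bookkeeping: substitution, orbit-stabilizer, and the identity $\sum_{F\in [F_0]} |W_F|/|W| = 1$.
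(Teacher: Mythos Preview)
Your proof is correct and follows essentially the same route as the paper's: invoke Brieskorn's decomposition, observe it is $W$-equivariant by naturality, and reorganize the sum over flats by $W$-orbits using orbit-stabilizer. The only minor quibble is that your nbc-basis justification for equivariance is unnecessary (and slightly delicate, since nbc bases depend on a choice of ordering that $W$ need not preserve); the naturality argument you gave first already suffices.
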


\begin{proof}
Brieskorn's lemma says that the natural map $$\bigoplus_{\substack{F\in L\\ \rk F = p}}\OS_{M_F,p}\to \OS_{M,p}$$
is an isomorphism.  When we incorporate the action of $W$, this map gives us the equation
$$\OS_{M,p}^W \;\;= \sum_{\substack{[F]\in L/W\\ \rk F = p}}\Ind_{W_F}^W\!\left(\OS^{W_F}_{M_F,p}\right)
= \sum_{\substack{F\in L\\ \rk F = p}}\frac{|W_F|}{|W|}\Ind_{W_F}^W\!\left(\OS^{W_F}_{M_F,p}\right).$$
Our statement follows immediately from this.
\end{proof}

\begin{lemma}\label{easy}
For any equivariant matroid $W\curvearrowright M$ of positive rank,
$$\sum_{[F]\in L/W}(-1)^{\rk F} \Ind_{W_F}^W\!\left(\OS^{W_F}_{M_F,\,\rk M_F}\right) 
= \sum_{F\in L}\frac{|W_F|}{|W|}(-1)^{\rk F} t^{\crk F}\Ind_{W_F}^W\!\left(\OS^{W_F}_{M_F,\,\rk M_F}\right) = 0.$$
\end{lemma}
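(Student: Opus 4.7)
The plan is to derive this as an immediate corollary of the two preceding lemmas. The two sums asserted to vanish are precisely the two formulas for $H_M^W(t)$ in Lemma \ref{Brieskorn}, each specialized at $t=1$, so that every factor $t^{\crk F}$ collapses to $1$ (I read the $t^{\crk F}$ still visible in the middle expression of the statement as a typographical leftover). Concretely, I would apply the ring homomorphism $\Z[t]\to\Z$, $t\mapsto 1$, extended to $\grVRep(W)\to\VRep(W)$, to both expressions in Lemma \ref{Brieskorn}; this produces exactly the first and second sums appearing in the statement of Lemma \ref{easy}.

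Second, I would invoke Lemma \ref{one}, which gives $H_M^W(1) = 0$ whenever $M$ has positive rank. Both sums therefore vanish, which is what was to be shown.

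The main obstacle is essentially nonexistent: the real content sits in Lemma \ref{one} (acyclicity of the Orlik--Solomon complex under multiplication by $e = \sum_{i\in\cI} e_i$) and in Lemma \ref{Brieskorn} (the equivariant Brieskorn decomposition, together with orbit--stabilizer to pass between the orbit sum and the weighted sum over all flats). The only thing left to check is that specialization at $t=1$ commutes with $\Ind_{W_F}^W$ and with both summation operations, which is automatic since $\grVRep(W)\to\VRep(W)$ is $\Z[t]$-linear and induction is additive. Thus the lemma should require no new argument beyond what has already been established.
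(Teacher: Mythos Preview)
Your proposal is correct and matches the paper's proof exactly: the paper simply states that the lemma follows immediately from Lemmas \ref{one} and \ref{Brieskorn}. Your observation that the stray $t^{\crk F}$ in the middle expression is a typographical leftover is also correct.
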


\begin{proof}
This follows immediately from 
Lemmas \ref{one} and \ref{Brieskorn}.
\end{proof}

Lemma \ref{easy} is an equivariant version of the statement that 
\begin{equation*}\label{mob-left}\sum_{F\in L}\mu(\emptyset, F) = 0\end{equation*}
when $M$ has positive rank.  
There is also a dual statement, which says
that \begin{equation*}\label{mob-right}\sum_{F\in L}\mu(F, \cI) = 0\end{equation*}
when $M$ has positive rank.  Lemma \ref{dual} is an equivariant version of this dual equation.  Surprisingly, the proof of Lemma \ref{dual} 
is much more difficult than the proof of Lemma \ref{easy}.

\begin{lemma}\label{dual}
For any equivariant matroid $W\curvearrowright M$ of positive rank, 
$$\sum_{[F]\in L/W} (-1)^{\crk F} \Ind_{W_F}^W\left(\OS^{W_F}_{M^F\!, \,\crk F}\right)
= \sum_{F\in L}\frac{|W_F|}{|W|} (-1)^{\crk F} \Ind_{W_F}^W\left(\OS^{W_F}_{M^F\!, \,\crk F}\right) = 0.$$
\end{lemma}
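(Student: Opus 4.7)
The plan is to verify this identity of virtual $W$-representations by comparing characters: since a virtual representation of a finite group is determined by its character, it suffices to show that for every $w \in W$, the character of the left-hand side at $w$ vanishes.

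First I would simplify this character. Applying the Frobenius induced-character formula $\chi_{\Ind_H^W V}(w) = \frac{1}{|H|}\sum_{g \in W:\,g^{-1}wg \in H} \chi_V(g^{-1}wg)$ to each summand, interchanging the order of summation, and substituting $F' := gF$ (so that the condition $g^{-1}wg \in W_F$ becomes $wF' = F'$, while the character value and the corank are unchanged under the $g$-translation), the weight $|W_F|/|W|$ combines with $1/|W_F|$ to give $1/|W|$, which is absorbed by the resulting free sum over $g \in W$. The desired vanishing reduces to
\[
\sum_{F \in L^w} (-1)^{\crk F}\, \chi_{\OS_{M^F,\,\crk F}}(w) = 0 \qquad \text{for every } w \in W,
\]
where $L^w := \{F \in L : wF = F\}$ is the sub-poset of $w$-fixed flats.

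The heart of the argument is then the character formula
\[
\chi_{\OS_{M^F,\,\crk F}}(w) = (-1)^{\crk F}\,\mu_{L^w}(F, \hat{1}) \qquad \text{for every } F \in L^w,
\]
where $\mu_{L^w}$ denotes the Möbius function of the sub-poset $L^w$. This is the equivariant analog of the classical identity $\dim \OS_{M,\,\rk M} = (-1)^{\rk M}\mu_L(\hat{0},\hat{1})$; for realizable matroids it follows from Lehrer's theorem on fixed-point subarrangements. To establish it for abstract matroids, I would induct on $\crk F$ using Brieskorn's decomposition (Lemma \ref{Brieskorn}) applied to $M^F$: one shows that the character of the equivariant characteristic polynomial of $M^F$ evaluated at $w$ equals the ordinary characteristic polynomial of the fixed sub-poset $(L^F)^w = L^w \cap [F,\hat{1}]$, and the displayed formula then appears as the constant-in-$t$ coefficient of both sides.

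Substituting this formula back, the two factors $(-1)^{\crk F}$ cancel and the vanishing reduces to the classical Möbius identity
\[
\sum_{F \in L^w} \mu_{L^w}(F, \hat{1}) = \delta_{\hat{0}, \hat{1}} = 0
\]
in the poset $L^w$ (which contains both $\hat{0}$ and $\hat{1}$, as these are fixed by every $w$); the last equality holds because $M$ has positive rank, so $\hat{0} \neq \hat{1}$ in $L$ and hence in $L^w$. The hard part of the plan is the character formula: in the non-equivariant setting both the original and the dual vanishing flow from a single Möbius inversion inside $L$, but equivariantly this single inversion is unavailable---this is precisely the non-associativity the authors refer to. Passing to characters and working one $w$ at a time decouples the problem into a family of ordinary Möbius inversions inside the fixed sub-posets $L^w$, where classical tools apply.
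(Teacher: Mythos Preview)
Your approach is correct and genuinely different from the paper's.  The paper proves the lemma by constructing an explicit $W$-equivariant chain complex
\[
C_p(M) = \bigoplus_{\crk F = p}\OS_{M^F\!,\,p}
\]
with differential built from the maps $e_S\mapsto\partial e_S$, and then proving exactness by induction on $\rk M$ via a filtration by whether a fixed element $i\in\cI$ lies in $F$.  The vanishing is then the Euler characteristic of an exact $W$-complex.

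Your argument instead fixes $w\in W$ and reduces everything to the classical M\"obius function of the fixed sub-poset $L^w$.  The key step is the trace formula
\[
\chi_{\OS_{N,\,\rk N}}(w)=(-1)^{\rk N}\,\mu_{L(N)^w}(\hat 0,\hat 1),
\]
which you correctly propose to prove by induction on $\rk N$ using Brieskorn's decomposition (the trace over $\OS_{N,p}$ picks up only the $w$-fixed flats) together with Lemma~\ref{one}.  One small point of care: as stated, your induction is on $\crk F$ for contractions $M^F$, but the inductive step feeds in minors $M^F_G$ that are not themselves contractions of $M$; the clean fix is to phrase the claim for an arbitrary matroid $N$ with automorphism $w$ and induct on $\rk N$, exactly as you implicitly do.  Once this formula is in hand, the lemma becomes the classical identity $\sum_{F\in L^w}\mu_{L^w}(F,\hat 1)=0$.

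The comparison is instructive.  The paper's proof produces a genuine categorical witness (an acyclic complex) for the identity in $\VRep(W)$, which is intrinsically interesting and in the spirit of the paper's remark that equivariant M\"obius inversion is unavailable.  Your proof sidesteps that obstruction entirely: passing to characters one $w$ at a time decouples the problem into ordinary M\"obius inversions in the posets $L^w$, where associativity is restored.  This is shorter and uses only Lemmas~\ref{one} and~\ref{Brieskorn} (so there is no circularity), at the cost of not exhibiting any equivariant object realizing the cancellation.
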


\begin{proof}
For any flat $F$, we have $$\OS_{M^F\!, \,\crk F} = \C\{e_S\mid \text{$S$ a basis for $M^F$}\}\;\Big{/}\;\C\{\partial e_C\mid \text{$C$ a circuit for $M^F$ of rank $\crk F$}\}.$$
If $F\leq G$ with $\crk F = p$ and $\crk G = p-1$, we define a map $$\varphi^F_G:\OS_{M^F\!,\, p}\to \OS_{M^G\!,\, p-1}$$ 
by the formula $\varphi^F_G(e_S) := \partial e_S$ for any basis $S$ of $M^F$, where we implicitly set $e_i=0$ for all $i\in G$.  
More precisely, we note that $S$ can contain at most one element of $G$.
If $S$ contains no elements of $G$, then $\varphi^F_G(e_S) := \partial e_S = 0\in \OS_{M^G\!,\, p-1}$.  If $S = \{i_1,\ldots,i_r\}$ and $i_k\in G$, then 
we put $S_k:= S\smallsetminus \{i_k\}$ and $\varphi^F_G(e_S) := (-1)^k e_{S_k}$.  This is well defined because $\partial^2 = 0$.

Let $$C_p(M) := \bigoplus_{\crk F = p}\OS_{M^F\!,\, p},$$
and combine the various maps $\varphi^F_G$ to obtain a map $\varphi_p:C_p(M)\to C_{p-1}(M)$.  We claim that $(C_\bullet(M), \varphi_\bullet)$
is an exact sequence.

To show that $\varphi_{p}\circ\varphi_{p+1} = 0$, we need to show that, for all $E\leq G$ with $\crk E = p+1$ and $\crk G = p-1$,
we have $$\sum_{E<F<G}\varphi^F_G\circ\varphi^E_F = 0.$$
Let $S$ be a basis for $M^E$.  Then $\varphi^F_G\circ\varphi^E_F(e_S) = 0$ unless $F$ contains exactly one element
of $S$ and $G$ contains exactly two elements of $S$.  Thus we can reduce to the situation where $S = \{i_1,\ldots,i_r\}$, $F_k$ is the flat spanned
by $S$ and $i_k$, $F_\ell$ is the flat spanned by $S$ and $i_\ell$, and $G$ is the flat spanned by $S$, $i_k$, and $i_\ell$, and we need to show
that $\varphi^{F_k}_G\circ\varphi^E_{F_k}(e_S) + \varphi^{F_\ell}_G\circ\varphi^E_{F_\ell}(e_S) = 0$.  This is easily checked by hand.
Thus $(C_\bullet(M), \varphi_\bullet)$ is a complex.

To prove that our complex is exact, we proceed by induction on $\rk M$.  The case $\rk M = 1$ is trivial.  Fix an $M$ of rank strictly greater than
1, and assume that the statement is proved for all smaller ranks.  Choose an index $i\in\cI$, and consider the sum
$$C'_\bullet(M):= \bigoplus_{i\in F}\OS_{M^F\!,\, \crk F}\subset C_\bullet(M)$$
ranging over all flats $F$ that contain the index $i$.
It is clear that this is a subcomplex, and that $$C'_\bullet(M)\cong C_\bullet(M'),$$ where $M'$ is the restriction of $F$ to the unique flat of rank 1
containing $i$.  Let $$C''_\bullet(M):= C_\bullet(M)/ C'_\bullet(M)$$ be the quotient complex.
As a vector space, we have $$C''_p(M) \cong \bigoplus_{\substack{\crk F = p\\ i\notin F}}\OS_{M^F\!,\, p}.$$
Furthermore, for each flat $F$ of corank $p$ that does not contain $i$, we have an isomorphism
$$\bigoplus_{\substack{i\notin G\geq F\\ \crk G = 1}}\OS_{M_G^F\!,\, p-1}\;\to \;\OS_{M^F\!,\, p}$$
given by multiplication by $e_i$.  (Indeed, if we choose an order on $\cI$ such that $i$ is the maximal element, then
multiplication by $e_i$ gives a bijection from the nbc
basis for the left-hand side to the nbc basis for the right-hand side.\footnote{The abbreviation nbc stands for ``no broken circuit";
see \cite[Theorem 2.8]{YuzOS} for a discussion of this basis.})
These isomorphisms fit together into an isomorphism of complexes $$C''_\bullet(M)\cong \bigoplus_{\substack{i\notin G\\ \crk G = 1}} C_{\bullet-1}(M_G).$$
Now consider the short exact sequence of complexes
$$0\to C'_\bullet(M)\to C_{\bullet}(M)\to C''_{\bullet}(M)\to 0.$$
Since $\rk M' = \rk M_G = \rk M - 1 > 0$, our inductive hypotheses imply that $C'_\bullet(M)$ and $C''(M)$ both have
trivial homology.  Then the long exact sequence in homology tells us that homology of $C_\bullet(M)$ vanishes, as well.

Finally, we note that the complex $C_\bullet(M)$ admits an action of $W$ with $$C_p(M) = \sum_{\substack{[F]\in L/W\\ \crk F = p}} 
\Ind_{W_F}^W\left(\OS^{W_F}_{M^F\!, \,p}\right)\in \Rep(W).$$
Since $C_\bullet(M)$ has trivial homology, its Euler characteristic is zero.  This proves the lemma.
\end{proof}

The following lemma is an equivariant version of the statement that $\sum_{F\in L}\chi_{M^F}(t) = t^{\rk M}$.

\begin{lemma}\label{counting}
For any equivariant matroid $W\curvearrowright M$,
$$\sum_{[F]\in L/W} \Ind_{W_F}^W\left(H_{M^F}^{W_F}(t)\right) = \sum_{F\in L} \frac{|W_F|}{|W|}\Ind_{W_F}^W\left(H_{M^F}^{W_F}(t)\right) = t^{\rk M}\tau_W,$$
where $\tau_W$ is the trivial representation of $W$.
\end{lemma}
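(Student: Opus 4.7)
The plan is to mimic the standard nonequivariant proof, which expands $\chi_{M^F}(t) = \sum_{G\geq F}\mu(F,G)\,t^{\crk G}$, swaps the order of summation, and uses the Möbius identity $\sum_{F\leq G}\mu(F,G) = \delta_{G,\emptyset}$ to collapse the double sum to $t^{\rk M}$. The two inputs to that argument both already have equivariant lifts in the excerpt: Lemma \ref{Brieskorn} expands $H^{W_F}_{M^F}(t)$ as a signed sum over flats $G\geq F$, and Lemma \ref{dual} is precisely the vanishing that will play the role of $\sum_{F\leq G}\mu(F,G)=0$ for $G>\emptyset$. The whole proof should then be a matter of combining these two ingredients in the right order.

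Concretely, I would first apply Lemma \ref{Brieskorn} to each equivariant matroid $W_F\curvearrowright M^F$, whose lattice of flats is $L^F$ and whose localization at $G\geq F$ has rank $\rk G-\rk F$ and corank $\crk G$ inside $M^F$. Substituting this expansion into the left-hand side, collapsing iterated induction via $\Ind_{W_F}^W\Ind_{W_{F,G}}^{W_F} = \Ind_{W_{F,G}}^W$, and exchanging the outer $\sum_F$ with the inner $\sum_{G\geq F}$ produces a single sum indexed by pairs $F\leq G$ of the form
$$\sum_{G\in L} t^{\crk G}\sum_{F\leq G}\frac{|W_{F,G}|}{|W|}(-1)^{\rk G-\rk F}\Ind_{W_{F,G}}^W \OS^{W_{F,G}}_{(M^F)_G,\,\rk G-\rk F}.$$

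The second step is to recognize the inner sum, for fixed $G$, as the left-hand side of Lemma \ref{dual} applied to the equivariant matroid $W_G\curvearrowright M_G$. This requires the identifications $(M^F)_G = (M_G)^F$ (the ``interval matroid'' on $G\smallsetminus F$ whose lattice of flats is $[F,G]$) and $W_{F,G} = (W_G)_F$ (the stabilizer of $F$ inside $W_G$). Granting these, pulling $\frac{|W_G|}{|W|}\Ind_{W_G}^W$ out front converts the inner sum into the left-hand side of Lemma \ref{dual} for $W_G\curvearrowright M_G$. When $\rk G>0$ that sum vanishes, so only $G=\emptyset$ survives; there the single surviving summand has $F=\emptyset$ and contributes the trivial representation $\tau_W$, multiplied by $t^{\crk\emptyset}=t^{\rk M}$, yielding the claim.

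The main obstacle is not conceptual but bookkeeping: verifying that the rank, corank, sign, stabilizer, and induction data all line up so that the inner sum is genuinely the left-hand side of Lemma \ref{dual} rather than a near-cousin, and separately peeling off the $G=\emptyset$ case because Lemma \ref{dual} is only available in positive rank. Beyond these checks, no new ideas are needed.
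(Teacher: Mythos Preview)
Your proposal is correct and follows essentially the same route as the paper: expand $H_{M^F}^{W_F}(t)$ via Lemma~\ref{Brieskorn}, swap the order of summation, and identify the inner sum over $F\leq G$ with the left-hand side of Lemma~\ref{dual} for $W_G\curvearrowright M_G$, leaving only the contribution from the rank-$0$ flat. The bookkeeping identifications you flag, namely $(M^F)_G=(M_G)^F$ and $W_{FG}=(W_G)_F$, are exactly the ones the paper uses implicitly.
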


\begin{proof}
Applying Lemma \ref{Brieskorn} to $W_F\curvearrowright M^F$, we have
\begin{eqnarray*}&& 
% \sum_{[F]\in L/W} \Ind_{W_F}^W\left(H_{M^F}^{W_F}(t)\right)\\ &=& 
\sum_{F\in L} \frac{|W_F|}{|W|} \Ind_{W_F}^W\left(H_{M^F}^{W_F}(t)\right)\\
&=& \sum_{F\leq G}\frac{|W_{FG}|}{|W|}(-1)^{\rk G - \rk F}t^{\crk G} \Ind_{W_{FG}}^W\left(\OS_{M^F_G,\,\rk G - \rk F}^{W_F^G}\right)\\
&=& \sum_{G\in L} \frac{|W_G|}{|W|}(-1)^{\rk G} t^{\crk G}\Ind_{W_G}^W\left(\sum_{F\leq G}\frac{|W_{FG}|}{|W_G|}(-1)^{\rk F}\Ind_{W_{FG}}^{W_G}\left(\OS_{M^F_G, \,\rk G - \rk F}^{W_F^G}\right)\right).
\end{eqnarray*}
Applying Lemma \ref{dual} to $W_G\curvearrowright M_G$, we have 
$$\sum_{F\leq G}\frac{|W_{FG}|}{|W_G|}(-1)^{\rk F}\Ind_{W_{FG}}^{W_G}\left(\OS_{M^F_G, \,\rk G - \rk F}^{W_F^G}\right) = 0$$
unless $G$ is equal to the unique flat of rank 0, in which case it is equal to $\tau_{W}$.  
% Thus
% $$\sum_{[F]\in L/W} \Ind_{W_F}^W\left(H_{M^F}^{W_F}(t)\right) = t^{\rk M}\tau_W$$ as desired.
\end{proof}

\begin{remark}
Suppose that $M$ is $W$-equivariantly realizable by a complex linear space $V$.  More precisely, suppose that we are given
a linear subspace $V\subset \C^\mathcal{I}$, preserved by the action of $W$, such that a subset $B\subset \cI$ is a basis for $M$
if and only if the projection of $V$ onto $\C^B$ is an isomorphism.
In this case, Lemma \ref{counting} has a nice geometric interpretation.
The right-hand side of the equation is clearly isomorphic to the compactly supported cohomology of $V$. It is possible to compute
this cohomology via a spectral sequence whose $E_1$ page consists of the compactly supported cohomology groups of the various
strata.  By comparing the mixed Hodge structures on the various groups, we can conclude that this spectral sequence degenerates
at the $E_2$ page, which is given by the left-hand side of the equation.
\end{remark}

The following lemma is an equivariant version of \cite[Lemma 2.1]{EPW}.

\begin{lemma}\label{whatevernickwantstorenamethis}
For any equivariant matroid $W \curvearrowright M$ of positive rank, $$\sum_{[F]\in L/W} \Ind_{W_F}^{W} \left( t^{\rk F} H_{M_F}^{W_F} (t^{-1}) \otimes H_{M^F}^{W_F}(t)\right) 
= \sum_{F\in L}\frac{|W_F|}{|W|} \Ind_{W_F}^{W} \left( t^{\rk F} H_{M_F}^{W_F} (t^{-1}) \otimes H_{M^F}^{W_F}(t)\right) 
=0.$$
\end{lemma}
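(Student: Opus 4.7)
The plan is to expand $t^{\rk F} H_{M_F}^{W_F}(t^{-1})$ via Brieskorn's lemma (Lemma \ref{Brieskorn}), swap the order of summation in the resulting double sum, recognize the inner sum as an instance of Lemma \ref{counting} applied to $W_G \curvearrowright M^G$, and finally invoke Lemma \ref{easy} to conclude. The equality between the two sums in the statement is a standard orbit-stabilizer reformulation (the orbit of $F$ has size $|W|/|W_F|$, and the summand depends only on $[F]$), so it suffices to prove vanishing of the second (weighted) form. Denote this sum by $S(t)$.

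First I would apply Lemma \ref{Brieskorn} to the equivariant matroid $W_F \curvearrowright M_F$, whose flats are $\{G \in L \mid G \leq F\}$ with stabilizer of $G$ equal to $W_{FG}$. Rewriting the resulting expansion as a polynomial in $t$ (rather than $t^{-1}$) gives
$$t^{\rk F} H_{M_F}^{W_F}(t^{-1}) = \sum_{G \leq F} \frac{|W_{FG}|}{|W_F|} (-1)^{\rk G} t^{\rk G} \Ind_{W_{FG}}^{W_F}\!\left(\OS_{M_G, \rk G}^{W_{FG}}\right).$$
Substituting into $S(t)$, applying the projection formula to bring the tensor factor $H_{M^F}^{W_F}(t)$ inside the inner induction, and using transitivity of induction $\Ind_{W_F}^W \circ \Ind_{W_{FG}}^{W_F} = \Ind_{W_{FG}}^W$, one obtains a double sum over pairs $G \leq F$ in $L$. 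Swapping the order of summation yields
$$S(t) = \sum_{G \in L} (-1)^{\rk G} t^{\rk G} \sum_{F \geq G} \frac{|W_{FG}|}{|W|} \Ind_{W_{FG}}^{W}\!\left(\OS_{M_G, \rk G}^{W_{FG}} \otimes \Res_{W_{FG}}^{W_F} H_{M^F}^{W_F}(t)\right).$$

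Next, for fixed $G$, I would factor $\Ind_{W_{FG}}^W = \Ind_{W_G}^W \circ \Ind_{W_{FG}}^{W_G}$ and use the projection formula a second time (recognizing $\OS_{M_G, \rk G}^{W_{FG}} = \Res_{W_{FG}}^{W_G} \OS_{M_G, \rk G}^{W_G}$) to pull $\OS_{M_G, \rk G}^{W_G}$ outside the inner induction. This rewrites the inner sum as
$$\frac{|W_G|}{|W|}\,\Ind_{W_G}^W\!\left(\OS_{M_G, \rk G}^{W_G} \otimes \sum_{F \geq G} \frac{|W_{FG}|}{|W_G|} \Ind_{W_{FG}}^{W_G}\!\left(H_{M^F}^{W_{FG}}(t)\right)\right).$$
Lemma \ref{counting} applied to $W_G \curvearrowright M^G$, whose flats are $\{F \in L \mid F \geq G\}$ with $(M^G)^F = M^F$ and stabilizer of $F$ equal to $W_{FG}$, identifies the sum in parentheses with $t^{\crk G} \tau_{W_G}$.

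Substituting back and using $\rk G + \crk G = \rk M$ collapses everything to
$$S(t) = t^{\rk M} \sum_{G \in L} \frac{|W_G|}{|W|} (-1)^{\rk G} \Ind_{W_G}^W\!\left(\OS_{M_G, \rk G}^{W_G}\right),$$
which vanishes by Lemma \ref{easy} since $M$ has positive rank. The main obstacle is the bookkeeping of restrictions and inductions across the nested groups $W_{FG} \subset W_F$ and $W_{FG} \subset W_G \subset W$, and verifying that each application of the projection formula correctly matches the representations living on each subgroup; once this is set up carefully, the computation proceeds by direct manipulation.
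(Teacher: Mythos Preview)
Your proof is correct and follows essentially the same route as the paper: expand $t^{\rk F}H_{M_F}^{W_F}(t^{-1})$ via Lemma~\ref{Brieskorn}, manipulate inductions to interchange the order of summation, apply Lemma~\ref{counting} to $W_G\curvearrowright M^G$ for the inner sum, and then conclude. The only cosmetic difference is that at the final step the paper recognizes the resulting sum as $t^{\rk M}H_M^W(1)$ and cites Lemma~\ref{one}, whereas you cite Lemma~\ref{easy} directly; since Lemma~\ref{easy} is precisely the Brieskorn expansion of $H_M^W(1)=0$, this is the same argument.
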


\begin{proof}
Applying Lemma \ref{Brieskorn} to $W_F\curvearrowright M_F$, we have
\begin{eqnarray*}
&& % \sum_{[F]\in L/W} \Ind_{W_F}^{W} \left( t^{\rk F} H_{M_F}^{W_F} (t^{-1}) \otimes H_{M^F}^{W_F}(t)\right)\\ &=& 
\sum_{F\in L} \frac{|W_F|}{|W|}\Ind_{W_F}^{W} \left(t^{\rk F} H_{M_F}^{W_F} (t^{-1}) \otimes H_{M^F}^{W_F}(t)\right)\\
&=& \sum_{E\leq F}\frac{|W_{EF}|}{|W|}(-1)^{\rk E}t^{\rk E}\Ind_{W_F}^{W} \left(\Ind_{W_{EF}}^{W_F}
\left(\OS_{M_E,\,\rk E}^{W_{EF}}\right)\otimes H_{M^F}^{W_F}(t)\right)\\
&=& \sum_{E\leq F}\frac{|W_{EF}|}{|W|}(-1)^{\rk E}t^{\rk E}\Ind_{W_{EF}}^{W} 
\left(\OS_{M_E,\,\rk E}^{W_{EF}}\otimes H_{M^F}^{W_{EF}}(t)\right)\\
&=& \sum_{E\leq F}\frac{|W_{EF}|}{|W|}(-1)^{\rk E}t^{\rk E}\Ind_{W_{E}}^{W} 
\left(\OS_{M_E,\,\rk E}^{W_{E}}\otimes \Ind_{W_{EF}}^{W_E}\left(H_{M^F}^{W_{EF}}(t)\right)\right)\\
&=& \sum_{E\in L}\frac{|W_{E}|}{|W|}(-1)^{\rk E}t^{\rk E}\Ind_{W_{E}}^{W} 
\left(\OS_{M_E,\,\rk E}^{W_{E}}\otimes \sum_{E\leq F}\frac{|W_{EF}|}{|W_{E}|}\Ind_{W_{EF}}^{W_E}\left(H_{M^F}^{W_{EF}}(t)\right)\right).
% &=& \sum_{E\in L}\frac{|W_{E}|}{|W|}(-1)^{\rk E}t^{\rk E}\Ind_{W_{E}}^{W} 
% \left(\OS_{M_E,\,\rk E}^{W_{E}}\otimes \sum_{[F]\in L^E/W_E}\Ind_{W_{EF}}^{W_E}\left(H_{M^F}^{W_{EF}}(t)\right)\right).
\end{eqnarray*}
Applying Lemma \ref{counting} to $W_E\curvearrowright M^E$, this becomes
\begin{eqnarray*}
&& \sum_{E\in L}\frac{|W_{E}|}{|W|}(-1)^{\rk E}t^{\rk E}\Ind_{W_{E}}^{W} 
\left(\OS_{M_E,\,\rk E}^{W_{E}}\otimes t^{\crk E}\tau_{W_E}\right)\\
&=& t^{\rk M}\sum_{E\in L}\frac{|W_{E}|}{|W|}(-1)^{\rk E}\Ind_{W_{E}}^{W} 
\left(\OS_{M_E,\,\rk E}^{W_{E}}\right)\\
&=& t^{\rk M} H_M^W(1),
\end{eqnarray*}
which vanishes by Lemma \ref{one}.
\end{proof}

\subsection{The equivariant Kazhdan-Lusztig polynomial}
Now that we have established some basic properties of the equivariant characteristic polynomial, we are ready
to define the equivariant Kazhdan-Lusztig polynomial.  In the non-equivariant case, this polynomial is defined in \cite[Theorem 2.2]{EPW}.
The following theorem is a categorical version of that result.

\begin{theorem}\label{def}
There is a unique way to assign to each equivariant matroid $W \curvearrowright M$ an element $P^W_M(t)\in\grVRep(W)$,
called the {\bf equivariant Kazhdan-Lusztig polynomial},
such that the following conditions are satisfied:
\begin{enumerate}
\item If $\rk M = 0$, then $P^W_M(t)$ is equal to the trivial representation in degree 0.
\item If $\rk M > 0$, then $\deg P^W_M(t) < \tfrac{1}{2}\rk M$.
\item For every $M$, $\displaystyle t^{\rk M} P^W_M(t^{-1}) = \sum_{[F] \in L/W}\Ind_{W_F}^W\left(\OSc^{W_F}_{M_F}(t) 
\otimes P^{W_F}_{M^F}(t)\right).$
\item Given a homomorphism $\varphi:W'\to W$, $P^{W'}_M(t) = \varphi^* P^W_M(t)$.
\end{enumerate}
\end{theorem}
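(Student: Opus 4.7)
The plan is to prove existence and uniqueness of $P^W_M$ satisfying (1)--(3) by strong induction on $d := \rk M$ (the base case $d = 0$ being handled by (1)), and then deduce (4) from uniqueness via a Mackey argument. When $d > 0$, isolating the $F = \hat 0$ summand of (3)---where $M_{\hat 0}$ is the rank-zero matroid (so $\OSc^{W}_{M_{\hat 0}}(t)$ is the trivial representation in degree $0$) and $M^{\hat 0} = M$---recasts the condition as
\[
 t^d P^W_M(t^{-1}) - P^W_M(t) \;=\; R^W_M(t), \qquad
 R^W_M(t) \;:=\!\! \sum_{\substack{[F]\in L/W \\ F \neq \hat 0}}\!\! \Ind_{W_F}^W\!\left( \OSc^{W_F}_{M_F}(t) \otimes P^{W_F}_{M^F}(t) \right).
\]
Every summand in $R^W_M$ involves $P^{W_F}_{M^F}$ with $\rk M^F = \crk F < d$, so $R^W_M$ is determined by the inductive hypothesis. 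Uniqueness is then immediate: the operator $\Phi : P \mapsto t^d P(t^{-1}) - P$ on $\grVRep(W)$ is injective on elements of degree $< d/2$, since the coefficient of $t^i$ in $\Phi(P)$ for $i < d/2$ equals $-P_i$ and thus recovers $P$.

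For existence, the image of $\Phi$ restricted to polynomials of degree $< d/2$ is precisely the subspace of \emph{anti-palindromic} elements $Q$ of degree $\leq d$, meaning those with $Q(t) + t^d Q(t^{-1}) = 0$: any such $Q$ splits as a ``low half'' in degrees $< d/2$ and its sign-reflected copy in degrees $> d/2$ (with the middle coefficient forced to vanish when $d$ is even), and $\Phi^{-1}(Q) = -[Q]_{<d/2}$. Existence therefore reduces to proving the identity $R^W_M(t) + t^d R^W_M(t^{-1}) = 0$.

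To establish this identity, expand each $t^{\crk F} P^{W_F}_{M^F}(t^{-1})$ appearing in $t^d R^W_M(t^{-1})$ via the inductive instance of (3) applied to $W_F \curvearrowright M^F$; this produces a sum over pairs $F \leq G$ in $L$ with summand involving $\OSc^{W_{FG}}_{M^F_G}(t) \otimes P^{W_{FG}}_{M^G}(t)$. Using $\Ind_{W_F}^W \Ind_{W_{FG}}^{W_F} = \Ind_{W_{FG}}^W$, Frobenius reciprocity, and the evident identity $\Res^{W_F}_{W_{FG}}\, \OSc^{W_F}_{M_F} = \OSc^{W_{FG}}_{M_F}$, swap the order of summation so that $G$ is outermost. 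For each $G \neq \hat 0$, the coefficient of $P^{W_G}_{M^G}(t)$ becomes
\[
 \frac{|W_G|}{|W|}\Ind_{W_G}^W\!\left(\sum_{\hat 0 \neq F \leq G} \frac{|W_{FG}|}{|W_G|} \Ind_{W_{FG}}^{W_G}\!\left( t^{\rk F} \OSc^{W_{FG}}_{M_F}(t^{-1}) \otimes \OSc^{W_{FG}}_{M^F_G}(t) \right) \right).
\]
Lemma \ref{whatevernickwantstorenamethis} applied to $W_G \curvearrowright M_G$ (whose lattice of flats is $L_G$ and whose rank $\rk G$ is positive) asserts that the analogous sum over \emph{all} $F \leq G$ vanishes; since the $F = \hat 0$ contribution equals $\OSc^{W_G}_{M_G}(t)$, the sum over $F \neq \hat 0$ equals $-\OSc^{W_G}_{M_G}(t)$. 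Summing over $G \neq \hat 0$ yields $t^d R^W_M(t^{-1}) = -R^W_M(t)$, as desired. Condition (4) then follows from uniqueness: for $\varphi: W' \to W$, the polynomial $\varphi^* P^W_M$ satisfies (1) and (2) trivially, and satisfies (3) for $W'$ because applying $\varphi^*$ to (3) for $P^W_M$ converts $\sum_{[F] \in L/W} \Ind_{W_F}^W$ into $\sum_{[F'] \in L/W'} \Ind_{W'_{F'}}^{W'}$ via the Mackey decomposition of each $W$-orbit into a disjoint union of $W'$-orbits.

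The main obstacle is the reduction of the anti-palindromic identity for $R^W_M$ to Lemma \ref{whatevernickwantstorenamethis}: the double-sum reorganization and the Frobenius/Mackey bookkeeping of the stabilizer weights $|W_{FG}|/|W|$ must be executed carefully. The lemma is tailored precisely for this step, so the substantive new content of the proof is concentrated in the lemma itself (and ultimately in Lemma \ref{dual}, which the text notes requires a genuinely new approach in the equivariant setting beyond the non-equivariant M\"obius-inversion proof).
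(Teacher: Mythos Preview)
Your argument is correct and follows the paper's approach exactly: isolate the $F=\hat 0$ summand, reduce existence to the anti-palindromicity $t^{d}R^W_M(t^{-1})=-R^W_M(t)$, expand each $t^{\crk F}P^{W_F}_{M^F}(t^{-1})$ via the inductive instance of (3), reorganize the resulting double sum over $F\le G$, and invoke Lemma~\ref{whatevernickwantstorenamethis} for each fixed $G\neq\hat 0$. One organizational point worth tightening: when you ``swap the order of summation'' and pull out $P^{W_G}_{M^G}(t)$ via the projection formula, you are using $P^{W_{FG}}_{M^G}=\Res^{W_G}_{W_{FG}}P^{W_G}_{M^G}$, which is precisely condition (4) for the lower-rank matroid $M^G$; hence (4) must be carried in the inductive hypothesis rather than deduced only after the induction on (1)--(3) is complete (the paper's proof makes the same implicit use of (4) without comment), and your Mackey argument then correctly establishes (4) at rank $d$ once it is woven into the induction.
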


\begin{proof}
Let $M$ be a matroid of positive rank. We may assume inductively that $P_{M'}^{W'}(t)$ has been defined for every 
matroid $M'$ of rank strictly smaller than $\rk M$ and every group $W'$ acting on $M'$. In particular, $P^W_{M^F}(t)$ has been defined for all $\emptyset \neq F \in L(M)$. Let
$$R^W_M(t) \;\;:= \sum_{\emptyset \not = [F] \in L/W} \Ind_{W_F}^W \left( H_{M_F}^{W_F} (t) \otimes P_{M^F}^{W_F}(t) \right);$$
then item 3 says that we want
$$t^{\rk M}P^W_M(t^{-1}) - P_M^W(t) = R^W_M(t).$$
It is clear that there can be at most one element $P^W_M(t)\in\grVRep(W)$ of degree strictly less than $\frac 12 \rk M$ satisfying this condition. The existence of such a polynomial is equivalent to the statement that
$$t^{\rk M} R_M^W(t^{-1}) = - R^W_M(t),$$ so this is what we need to prove.
We have
\begin{align*}
t^{\rk M}R^W_M(t^{-1}) &= t^{\rk M} \sum_{\emptyset \neq [F] \in L/W} \Ind_{W_F}^W \left( H_{M_F}^{W_F}(t^{-1}) \otimes P_{M^F}^{W_F}(t^{-1}) \right)\\
&= \sum_{\emptyset \neq [F] \in L/W} \Ind_{W_F}^W \left( t^{\rk F} H_{M_F}^{W_F}(t^{-1}) \otimes t^{\rk M^F} P_{M^F}^{W_F}(t^{-1}) \right)\\
&=\sum_{\emptyset \neq [F] \in L/W} \Ind_{W_F}^W \left( t^{\rk F} H_{M_F}^{W_F}(t^{-1}) \otimes \sum_{[G] \in L^F/W_{F}} \Ind^{W_F}_{W_{FG}} \left( H^{W_{FG}}_{M^F_G} (t) \otimes P^{W_{FG}}_{M^G} (t) \right) \right)\\
% \end{align*}
% where $W_{FG} := W_F\cap W_G$.  Instead of summing over orbits of flats, we may instead sum over all flats and
% divide by the size of the orbit.  Thus we have
% \begin{align*}
% t^{\rk M}R^W_M(t^{-1}) &= 
&= \sum_{\emptyset \neq F \in L} \frac{|W_F|}{|W|} \Ind_{W_F}^W \left( t^{\rk F} H_{M_F}^{W_F}(t^{-1}) \otimes \sum_{G \in L^F} \frac{|W_{FG}|}{|W_F|}\Ind^{W_F}_{W_{FG}} \left( H^{W_{FG}}_{M^F_G} (t) \otimes P^{W_{FG}}_{M^G} (t) \right) \right)\\
&=
\sum_{\emptyset\neq F\leq G} \frac{|W_{FG}|}{|W|} \Ind_{W_F}^W \left( t^{\rk F} H_{M_F}^{W_F}(t^{-1}) \otimes  \Ind^{W_F}_{W_{FG}} \left( H^{W_{FG}}_{M^F_G} (t) \otimes P^{W_{FG}}_{M^G} (t) \right) \right)\\
&= \sum_{\emptyset \neq F \leq G} \frac{|W_{FG}|}{|W|} \Ind_{W_{FG}}^W \left( t^{\rk F} H_{M_F}^{W_{FG}}(t^{-1}) \otimes H^{W_{FG}}_{M^F_G} (t) \otimes P^{W_{FG}}_{M^G} (t) \right)\\
% &= \sum_{F \leq G} \frac{|W_{FG}|}{|W|} \Ind_{W_{FG}}^W \left( t^{\rk F} H_{M_F}^{W_{FG}}(t^{-1}) \otimes H^{W_{FG}}_{M^F_G} (t) \otimes P^{W_{FG}}_{M^G} (t) \right)\\
% & \hspace{2cm} - \sum_G \frac{|W_G|}{|W|} \Ind_{W_G}^W \left(H_{M_G}^{W_G} \otimes P^{W_G}_{M^G}(t) \right)\\
&= \sum_{G\neq\emptyset}\sum_{F \leq G} \frac{|W_{FG}|}{|W|} \Ind_{W_{FG}}^W \left( t^{\rk F} H_{M_F}^{W_{FG}}(t^{-1}) \otimes H^{W_{FG}}_{M^F_G} (t) \otimes P^{W_{FG}}_{M^G} (t) \right) - R_M^W(t).
\end{align*}
Thus it will suffice to show that, for any flat $G\neq\emptyset$,
$$\sum_{F \leq G} \frac{|W_{FG}|}{|W|} \Ind_{W_{FG}}^W \left( t^{\rk F} H_{M_F}^{W_{FG}}(t^{-1}) \otimes H^{W_{FG}}_{M^F_G} (t) \otimes P^{W_{FG}}_{M^G} (t) \right)=0.$$
Indeed, fixing a flat $G\neq \emptyset$, we have
\begin{eqnarray*}
&& \sum_{F \leq G} \frac{|W_{FG}|}{|W|} \Ind_{W_{FG}}^W \left( t^{\rk F} H_{M_F}^{W_{FG}}(t^{-1}) \otimes H^{W_{FG}}_{M^F_G} (t) \otimes P^{W_{FG}}_{M^G} (t) \right)\\
&=& 
\sum_{F\leq G} \frac{|W_{FG}|}{|W|} \Ind_{W_G}^W \left( P_{M^G}^{W_G}(t) \otimes \Ind_{W_{FG}}^{W_G} \left(  t^{\rk F} H_{M_F}^{W_{FG}}(t^{-1}) \otimes H^{W_{FG}}_{M^F_G}(t) \right) \right)\\
&=& \frac{|W_G|}{|W|} \Ind_{W_G}^W   \left( P_{M^G}^{W_G}(t) \otimes \sum_{F \in L_G} \frac{|W_{FG}|}{|W_G|} \Ind_{W_{FG}}^{W_G} \left(  t^{\rk F} H_{M_F}^{W_{FG}}(t^{-1}) \otimes H^{W_{FG}}_{M^F_G}(t)  \right) \right).
% \\
% &=& \frac{|W_G|}{|W|} \Ind_{W_G}^W   \left( P_{M^G}^{W_G}(t)\;\; \otimes \sum_{[F] \in L_G/W_G} \Ind_{W_{FG}}^{W_G} \left(  t^{\rk F} H_{M_F}^{W_{FG}}(t^{-1}) \otimes H^{W_{FG}}_{M^F_G}(t)  \right) \right).
\end{eqnarray*}
Lemma \ref{whatevernickwantstorenamethis}, applied to $W_G \curvearrowright M_G$, says that the internal sum is zero,
as desired.
\end{proof}

The following result follows immediately by looking at the coefficient of $t^{\rk M - i}$ in item 3 above.

\begin{proposition}\label{coef}
Let $C^W_{M,i}\in \VRep(W)$ be the coefficient of $t^i$ in $P^W_M(t)$.  If $i < \tfrac{1}{2}\rk M$, then
$$C^W_{M,i} = \sum_{\substack{[F]\in L/W\\ 0\leq j\leq \rk F}} (-1)^j\; \Ind_{W_F}^W\!\Big(\OS^{W_F}_{M_F,j}\otimes\; C^{W_F}_{M^F\!,\, \,\crk F - i + j}\Big).$$
\end{proposition}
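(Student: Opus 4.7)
The plan is to follow the hint in the paper: extract the coefficient of $t^{\rk M - i}$ from both sides of the defining identity in item 3 of Theorem \ref{def}, namely
$$t^{\rk M} P^W_M(t^{-1}) = \sum_{[F] \in L/W}\Ind_{W_F}^W\!\Big(\OSc^{W_F}_{M_F}(t) \otimes P^{W_F}_{M^F}(t)\Big).$$

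First I would handle the left-hand side. Writing $P^W_M(t) = \sum_k C^W_{M,k}\, t^k$, we have $t^{\rk M} P^W_M(t^{-1}) = \sum_k C^W_{M,k}\, t^{\rk M - k}$, so the coefficient of $t^{\rk M - i}$ is exactly $C^W_{M,i}$.

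Next I would expand the right-hand side. Using the definition $\OSc^{W_F}_{M_F}(t) = \sum_{j=0}^{\rk F} (-1)^j t^{\rk F - j} \OS^{W_F}_{M_F,j}$ and $P^{W_F}_{M^F}(t) = \sum_k C^{W_F}_{M^F,k} \,t^k$, each summand becomes
$$\sum_{j,k} (-1)^j\, t^{\rk F - j + k}\, \Ind_{W_F}^W\!\Big(\OS^{W_F}_{M_F,j}\otimes C^{W_F}_{M^F,k}\Big).$$
Extracting the coefficient of $t^{\rk M - i}$ imposes $\rk F - j + k = \rk M - i$, that is, $k = \crk F - i + j$, which produces precisely the expression claimed.

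The only subtlety is the $F = \emptyset$ term in the sum, which contributes (with $\rk F = 0$, $j = 0$, and $\OS^{W}_{M_\emptyset,0} = \tau_W$) the summand $C^W_{M,\,\rk M - i}$. This is exactly the place where the hypothesis $i < \tfrac{1}{2}\rk M$ is used: it forces $\rk M - i > \tfrac{1}{2}\rk M$, so by item 2 of Theorem \ref{def} this coefficient vanishes, and the $F = \emptyset$ term can safely be kept in the sum as stated. I do not anticipate any genuine obstacle here; the work is entirely in carefully bookkeeping the degree shifts and noting that $C^{W_F}_{M^F,k}$ is taken to be $0$ when $k$ falls outside the support of $P^{W_F}_{M^F}(t)$ so that the sum over $j$ terminates cleanly.
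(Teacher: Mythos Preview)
Your proof is correct and follows exactly the approach indicated in the paper, which simply says to compare coefficients of $t^{\rk M - i}$ in item~3 of Theorem~\ref{def}. Your additional remark about the $F=\emptyset$ term is accurate and makes explicit why the hypothesis $i<\tfrac{1}{2}\rk M$ ensures the formula involves only previously known quantities.
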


\begin{proof}
This follows immediately by looking at the coefficient of $t^{\rk M - i}$ on both sides of the equation in Theorem \ref{def}(3).
\end{proof}

\begin{corollary}\label{zero-one}
For any equivariant matroid $W\curvearrowright M$, 
$$C^W_{M,0} = \tau_W\and C^W_{M,1}\; = \sum_{\substack{[F]\in L/W\\ \crk F = 1}} \Ind_{W_F}^W(\tau_{W_F}) - OS^W_{M,1}.$$
\end{corollary}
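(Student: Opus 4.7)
The plan is to extract both statements from Proposition \ref{coef} by pruning its sum to the few summands that can be nonzero, using only conditions (1) and (2) of Theorem \ref{def} applied to the inner factor. Together those two conditions force $C^{W_F}_{M^F\!,k} = 0$ unless either $\crk F > 0$ and $k < \tfrac{1}{2}\crk F$, or else $\crk F = 0$ (so $F = \cI$) and $k = 0$, in which case $C^{W_F}_{M^F\!,0} = \tau_{W_F}$.

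For $C^W_{M,0}$ with $\rk M > 0$, the inner index $k = \crk F + j$ in Proposition \ref{coef} cannot be less than $\tfrac{1}{2}\crk F$ for $j \geq 0$ unless $\crk F = 0$, so only the term $F = \cI$, $j = 0$ survives, contributing $\Ind_W^W(\tau_W \otimes \tau_W) = \tau_W$. (If $\rk M = 0$ the identity is condition (1) itself.) For $C^W_{M,1}$ with $\rk M > 2$, the inner index $k = \crk F - 1 + j$ gives two surviving regimes: $F = \cI$ with $j = 1$ contributes $-\Ind_W^W(\OS^W_{M,1} \otimes \tau_W) = -\OS^W_{M,1}$, and $\crk F = 1$ with $j = 0$ contributes $\Ind_{W_F}^W(\tau_{W_F})$ once per orbit of corank-one flats. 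Summing yields the claimed formula.

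The hard part is handling the low-rank cases $\rk M \leq 2$, where Proposition \ref{coef} does not apply and the identity for $C^W_{M,1}$ must be verified by hand. Condition (2) forces $C^W_{M,1} = 0$ throughout, so it suffices to check that the right-hand side also vanishes. The case $\rk M = 0$ is immediate. For $\rk M = 1$, the unique corank-one flat is the rank-zero flat (the set of loops), which is $W$-stable, and the elementary observation that any rank-one matroid has $\OS^W_{M,1} = \tau_W$ (all non-loop generators coincide via the two-element circuit relations) makes the two terms cancel. For $\rk M = 2$, Brieskorn's lemma (Lemma \ref{Brieskorn}) applied in degree one, combined with the same rank-one calculation for each localization $M_F$ at an atom, gives $\OS^W_{M,1} = \sum_{[F]\,:\,\rk F = 1}\Ind_{W_F}^W(\tau_{W_F})$, which once more cancels the first sum on the right-hand side. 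Apart from this bookkeeping, everything follows directly from Proposition \ref{coef} and the degree bound of Theorem \ref{def}(2).
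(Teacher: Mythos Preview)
Your proof is correct and follows the same approach as the paper: apply Proposition \ref{coef} and use the degree bound from Theorem \ref{def}(2) to isolate the surviving terms. You are in fact more careful than the paper, which tacitly assumes Proposition \ref{coef} applies and does not address the low-rank cases $\rk M \le 2$ for $C^W_{M,1}$; your explicit verification of those edge cases via the rank-one Orlik--Solomon computation and Lemma \ref{Brieskorn} fills that gap.
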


\begin{proof}
We apply Proposition \ref{coef}.  When $i=0$, $C^{W_F}_{M^F\!,\, \,\crk F - i + j}\neq 0$ only if $j=0-\crk F$.
The proposition then says that $C^W_{M,0}$ is equal to $C^W_{M^F,0}$, where $F$ is the unique flat of corank 0.
By parts 1 and 4 of Theorem \ref{def}, this is equal to $\tau_W$.

When $i=1$, we have $C^{W_F}_{M^F\!,\, \,\crk F - i + j}\neq 0$ only if $j=0$ and $\crk F=1$ or $j=1$ and $\crk F = 0$.
The first case gives us a contribution of
$$\sum_{\substack{[F]\in L/W\\ \crk F = 1}} \Ind_{W_F}^W(\tau_{W_F})$$
(the permutation representation given by the action of $W$ on the set of corank 1 flats)
and the second case gives us a contribution of $-OS^W_{M,1}$.
\end{proof}

\begin{remark}
By taking dimensions of the representations in Corollary \ref{zero-one}, we recover Propositions 2.11 and 2.12 of \cite{EPW}.
\end{remark}

Suppose that $M$ is $W$-equivariantly realizable by a complex linear space $V\subset \C^{\cI}$.  
Let $X_V$ be the {\bf reciprocal plane}, which is defined as follows:
$$X_V := \overline{\{z\in (\C^\times)^\mathcal{I}\mid z^{-1}\in V\}} \subset \C^\cI.$$
The action of $W$ on $\cI$ induces an action on $X_V$.  The following corollary is an equivariant
version of \cite[Proposition 3.12]{EPW}.

\begin{corollary}\label{rep}
If $M$ is $W$-equivariantly realizable by a linear subspace $V\subset\C^\cI$, then we have $C^W_{M,i} = I\! H^{2i}(X_V; \C)\in\grRep(W)$.
% as representations of $W$.
\end{corollary}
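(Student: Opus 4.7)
The plan is to invoke the uniqueness clause of Theorem \ref{def}: I would define
$$Q^W_M(t) := \sum_{i \geq 0} t^i \cdot \IH^{2i}(X_V; \C) \;\in\; \grRep(W)$$
and verify that this assignment satisfies conditions (1)--(4) characterizing $P^W_M(t)$. The uniqueness in Theorem \ref{def} then forces $P^W_M(t) = Q^W_M(t)$, which is precisely the content of the corollary.

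Conditions (1), (2) and (4) are essentially formal. When $\rk M = 0$, $V = 0$ and $X_V$ is a point, so $Q^W_M(t) = \tau_W$ in degree zero. The degree bound in (2) follows from the non-equivariant statement \cite[Proposition 3.12]{EPW}, since the set of degrees in which $\IH^{2i}(X_V;\C)$ is nonzero is unaffected by remembering the $W$-action. For (4), restriction along a group homomorphism $\varphi:W'\to W$ simply regards the same vector space $\IH^{2i}(X_V;\C)$ as a $W'$-module via $\varphi$, which is exactly $\varphi^*Q^W_M(t)$.

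The substantive content is the recursion (3). I would obtain this by running the non-equivariant argument of \cite[Proposition 3.12]{EPW} with $W$-equivariance imposed throughout. One needs a small (or semismall) resolution $\pi : \widetilde{X}_V \to X_V$ that inherits the $W$-action; the log resolution / wonderful compactification construction is canonical enough to do this because it depends only on the lattice $L$, which $W$ preserves. The reciprocal plane $X_V$ is stratified by flats, $W$ permutes the strata via its action on $L$, and the stabilizer of the stratum indexed by $F$ is $W_F$. The closure of that stratum is $W_F$-equivariantly isomorphic to $X_{V^F}$, while its normal link is $W_F$-equivariantly the complement of the hyperplane arrangement associated to the localization $M_F$, whose graded cohomology is $\OS^{W_F}_{M_F,\bullet}$ as a $W_F$-representation. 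Applying the BBD decomposition theorem equivariantly to $\pi$ and grouping summands by $W$-orbits of flats, one reads off
$$t^{\rk M} Q^W_M(t^{-1}) \;=\; \sum_{[F] \in L/W} \Ind_{W_F}^W\!\left(\OSc^{W_F}_{M_F}(t) \otimes Q^{W_F}_{M^F}(t)\right),$$
which is precisely condition (3) for $Q^W_M(t)$.

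The principal obstacle is the $W$-equivariant refinement of the decomposition-theorem computation. In the nonequivariant case one only has to match Poincaré polynomials, but here one must identify the summands on the nose as $W$-representations. This rests on three pieces of equivariance that must be checked with care: the resolution $\pi$ must be natural for the $W$-action (which one ensures by building it from the flat lattice); the identification of the link of each stratum with a hyperplane arrangement complement must be $W_F$-equivariant; and the Orlik--Solomon presentation of that link's cohomology must hold as an isomorphism of $W_F$-representations, not merely of graded vector spaces. Once these three equivariant enhancements are in place, the bookkeeping that combines them into conditions (1)--(4) of Theorem \ref{def} is routine, and the corollary follows at once by uniqueness.
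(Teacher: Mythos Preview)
Your strategy is correct and matches the paper's: both arguments verify that $\sum_i t^i\,\IH^{2i}(X_V;\C)$ satisfies the defining recursion and then invoke uniqueness. The paper compresses this to a single citation, deducing the result from Proposition~\ref{coef} together with \cite[Remark 3.6]{PWY}, where the required equivariant recursion for the intersection cohomology groups is already recorded.

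One caution about your sketch of condition~(3): the input you need is not literally a small or semismall resolution of $X_V$ coming from the De Concini--Procesi wonderful compactification. What is actually used in \cite{EPW} and \cite{PWY} is the stratification of $X_V$ by flats together with the local product structure: an analytic (or \'etale) neighborhood of a point in the stratum indexed by $F$ is $W_F$-equivariantly a product of the arrangement complement for $M_F$ with $X_{V^F}$, so the stalk of the IC sheaf along that stratum is governed by $\IH^*(X_{V^F})$ and the cohomology of the complement is the Orlik--Solomon algebra of $M_F$. Feeding this into the standard long exact sequence (or spectral sequence) for intersection cohomology with respect to the stratification, and using purity of weights to force degeneration, yields exactly the identity in Theorem~\ref{def}(3) for $Q^W_M(t)$. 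Your three ``equivariance checks'' are the right things to verify; just replace the semismall-resolution framing with this local-product-structure framing and the argument goes through.
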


\begin{proof}
This follows from Proposition \ref{coef} and \cite[Remark 3.6]{PWY}.
\end{proof}

By definition, the coefficients $C^W_{M,i}$ are virtual representations of $W$.  When $M$ is $W$-equivariantly realizable, however,
Corollary \ref{rep} implies that they are honest representations.  We conjecture that this is always the case, even if $M$ is not
equivariantly realizable.

\begin{conjecture}\label{positivity}
For any equivariant matroid $W \curvearrowright M$, $P^W_M(t)\in\grRep(W)$.
\end{conjecture}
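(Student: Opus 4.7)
The realizable case already covered by Corollary \ref{rep} is the template: there, positivity follows from the fact that the coefficients of $P^W_M(t)$ are identified with honest graded pieces of intersection cohomology of the reciprocal plane, on which $W$ acts. The plan is to try to imitate this categorification without assuming realizability, by producing, for every equivariant matroid $W\curvearrowright M$, a genuine graded $W$-representation $\mathcal{IH}^W(M) = \bigoplus_i \mathcal{IH}^{2i}(M)$ whose equivariant graded character equals $P^W_M(t)$.

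First I would fix a candidate construction of $\mathcal{IH}^W(M)$. The natural thing is to build a purely combinatorial substitute for intersection cohomology of the reciprocal plane (or, equivalently, of the matroidal Schubert variety): in the flag-like description of $\OS^{W_F}_{M_F}(t)$ one already has an Orlik--Solomon module with a $W$-action, so one would look for a graded module, functorial in $W\curvearrowright M$, that decomposes along the recursion of Theorem \ref{def}(3). Concretely, the goal is to produce an exact sequence of graded $W$-representations
\begin{equation*}
0 \to \mathcal{IH}^W(M) \to \bigoplus_{[F]\in L/W} \Ind_{W_F}^W\!\left(\mathcal{OS}^{W_F}(M_F) \otimes \mathcal{IH}^{W_F}(M^F)\right) \to \tau_W[\rk M] \otimes \mathcal{IH}^W(M)^\vee \to 0
\end{equation*}
(or something analogous) whose characters reproduce the identity in Theorem \ref{def}(3). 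If such a resolution exists, positivity follows immediately because each term is an honest graded representation.

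Proceeding by induction on $\rk M$, I would assume $\mathcal{IH}^{W_F}(M^F)$ has been constructed for all $F \neq \emptyset$. The interior direct sum above is then an honest graded $W$-representation, and one needs a canonical $W$-equivariant splitting that cuts it into a degree-$<\tfrac12\rk M$ piece and its Poincar\'e-dual partner. In the realizable case, this splitting comes from the decomposition theorem applied to a semismall resolution of the reciprocal plane. In the non-realizable equivariant case, one has to produce the splitting by combinatorial means — this is where I expect the fight to be. The signs in the Orlik--Solomon components mean that naive induction using Theorem \ref{def}(3) expresses $P^W_M(t)$ as a difference of honest representations, and proving that this difference is itself a representation is exactly what is needed; it is not formal.

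The main obstacle, therefore, is to construct the equivariant analogue of the Hodge--Riemann/hard Lefschetz package for matroids in a way that respects the $W$-action. The pitfalls are analogous to those encountered in Lemma \ref{dual}: constructions that work nonequivariantly (Möbius inversion, choice of a linear order on $\cI$, choice of an nbc basis) fail to respect symmetries, so one must replace them by canonical, equivariant substitutes. If an equivariant small resolution or canonical decomposition of the augmented Chow ring of $M$ can be produced, with a $W$-action and an equivariant version of hard Lefschetz, then $\mathcal{IH}^W(M)$ arises as the primitive part in middle degree and positivity follows. Barring that, a fallback plan would be to verify the conjecture in families where enough combinatorial rigidity is present — uniform matroids (already done via Theorem \ref{uniform}), braid matroids of small rank, and more generally matroids whose automorphism groups act transitively on flags — and to use these checks to guide the general construction.
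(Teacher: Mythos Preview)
The statement you are attempting to prove is labeled as a \emph{conjecture} in the paper, and the paper contains no proof of it. There is nothing to compare your proposal against: the paper merely records the conjecture, notes that Corollary \ref{rep} settles the equivariantly realizable case, and then verifies the conjecture for uniform matroids via the explicit computation of Theorem \ref{uniform}.

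Your proposal is not a proof but a research outline, and you are candid about this (``this is where I expect the fight to be'', ``it is not formal''). The genuine gap is precisely the one you name: producing, for a non-realizable matroid, a $W$-equivariant graded module $\mathcal{IH}^W(M)$ together with a canonical equivariant splitting that categorifies the recursion in Theorem \ref{def}(3). Nothing in the paper supplies this. The inductive step you sketch expresses $P^W_M(t)$ as an alternating sum of inductions of honest representations, and passing from ``virtual'' to ``honest'' is exactly the content of the conjecture; no amount of formal manipulation of the recursion will close this gap. What is actually required is an equivariant analogue of the Hodge-theoretic package (hard Lefschetz and Hodge--Riemann relations) for a combinatorially defined Chow-type ring, compatible with the $W$-action. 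That is a substantial piece of work well beyond the scope of this paper, and your fallback plan of checking special families is what the paper itself does in Sections \ref{sec:uniform} and \ref{sec:braid}.

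In short: your strategic instincts are sound and point toward the right kind of argument, but what you have written is a plan of attack on an open problem, not a proof, and the paper makes no claim to have one.
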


\begin{remark}
When $W$ is the trivial group, Conjecture \ref{positivity} says that the coefficients of the ordinary Kazhdan-Lusztig polynomial
are natural numbers rather than just integers.  This conjecture appeared in \cite[Conjecture 2.3]{EPW}.  We note, however,
that it is much easier to construct non-realizable examples of equivariant matroids than it is to construct non-realizable examples
of ordinary matroids.  For example, let $U_{m,d}$ be the 
the uniform matroid of rank $d$ on $m+d$ elements.  This matroid is always realizable.  However, it has an action of the symmetric
group $S_{m+d}$, and it is equivariantly realizable if and only if $d\in\{0,1\}$ or $m\in\{0,1\}$.
In the following section, we will prove Conjecture \ref{positivity} for arbitrary uniform matroids.
\end{remark}

\section{Uniform matroids}\label{sec:uniform}
Let $U_{m,d}$ be the 
the uniform matroid of rank $d$ on $m+d$ elements, which admits an action of the symmetric group $S_{m+d}$.
Let $$H_{m,d}(t) := H_{U_{m,d}}^{S_{m+d}}(t),\qquad P_{m,d}(t) := P_{U_{m,d}}^{S_{m+d}}(t),\and
C_{m,d,i} := C_{U_{m,d},i}^{S_{m+d}}.$$
For any partition $\la$ of $m+d$, let $V[\la]$ be the irreducible representation of $S_{m+d}$ indexed by $\la$.
The purpose of this section is to prove the following result.

\begin{theorem}\label{uniform}For all $i>0$,
$${C_{m,d,i}\; % = \sum_{\substack{a+b+2i-1=d+m\\ 1\leq b\leq m < a}} V_{[a,b+1,2^{i-1}]}
\;\; = \sum_{b=1}^{\min(m,d-2i)} V[d+m-2i-b+1,b+1,2^{i-1}]\;\in\;\Rep(S_{m+d}).}$$
\end{theorem}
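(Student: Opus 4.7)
The plan is to translate the defining recursion of Theorem \ref{def}(3) into a functional equation in the ring $\Lambda$ of symmetric functions via the Frobenius characteristic $\ch\colon\Rep(S_n)\to\Lambda^n$, and then verify the conjectured closed form by Pieri-rule manipulations. Since $\ch$ sends induction from $S_a\times S_b$ to multiplication, and since the orbits of flats of $U_{m,d}$ under $S_{m+d}$ are parametrized by their common size $k\in\{0,1,\ldots,d-1\}$ together with the top flat $\mathcal{I}$, Theorem \ref{def}(3) becomes
\[
t^d\,\ch(P_{m,d}(t^{-1})) \;=\; \ch(H_{m,d}(t)) \;+\; \sum_{k=0}^{d-1}\, \ch(H_{0,k}(t))\cdot \ch(P_{m,d-k}(t)).
\]
The equivariant characteristic polynomials of the Boolean matroids $U_{0,k}$ are easy to read off, since the top Orlik-Solomon space is the sign representation of $S_k$: Lemma \ref{Brieskorn} gives $\ch(H_{0,k}(t)) = \sum_p (-1)^p t^{k-p}\, e_p\, h_{k-p}$, and then $\ch(H_{m,d}(t))$ is determined by Lemma \ref{one}.

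Following the strategy announced in the introduction, I would bundle everything into a trivariate generating function $\mathcal{P}(u,v,t)\in \Lambda[[u,v,t]]$ with $\ch(P_{m,d}(t))$ as the coefficient of $u^m v^d$, turning the recursion into a single functional equation. The conjectured formula provides an explicit candidate $\mathcal{P}^{\mathrm{guess}}$ as a signed multiplicity-free sum of Schur functions indexed by the hook-plus-rectangle partitions $(d{+}m{-}2i{-}b{+}1,\,b{+}1,\,2^{i-1})$. The central task is to show that $\mathcal{P}^{\mathrm{guess}}$ solves this functional equation.

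The heart of the calculation is the Pieri expansion of products $(e_p h_q)\cdot s_{(a,\,b+1,\,2^{i-1})}$: multiplication by $e_p$ adds a vertical $p$-strip and multiplication by $h_q$ adds a horizontal $q$-strip, and the rigidity of the hook-plus-rectangle shape should force massive cancellation among the alternating sums over $p$. My plan is to organize these cancellations by induction on $i$, using Corollary \ref{zero-one} to dispose of $i=0$ and $i=1$, and to invoke the degree bound $\deg P_{m,d}(t) < d/2$ from Theorem \ref{def}(2) to discard the spurious high-degree terms that otherwise arise under $t\mapsto t^{-1}$. An auxiliary plethystic substitution, for instance multiplication by a formal Cauchy kernel in the $u$ and $v$ variables, may be needed to collapse the alternating sums into a more symmetric form.

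The main obstacle will be the combinatorial bookkeeping. Each Pieri step produces many Schur functions whose shapes just barely miss the hook-plus-rectangle family, and verifying that these extraneous terms cancel in pairs will require a carefully designed sign-reversing involution on the lattice paths or standard tableaux parametrizing the Pieri strips. The constraints $1\le b\le \min(m,d-2i)$ bounding the summation range in Theorem \ref{uniform} should correspond, on the combinatorial side, to the conditions that a newly added strip remain inside the allowed shape, so matching these range constraints against the Pieri strip conditions is likely to be the most delicate part of the argument.
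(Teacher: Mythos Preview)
Your high-level strategy matches the paper's: translate Theorem \ref{def}(3) to symmetric functions via $\ch$, package everything into a trivariate generating function, and verify that the conjectured closed form satisfies the resulting functional equation using Pieri. Where you diverge is in the verification itself, and you are missing the one algebraic move that makes it clean.

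The paper first computes $1+\cH(t,u,0)=s(tu)/s(u)$ (Proposition \ref{htux}), then multiplies the generating-function identity \eqref{gen} through by $s(u)$ and rearranges to obtain a quantity
\[
\cR(t,u,x)=\Bigl(\tfrac{tu}{tu-x}+\cP(t,u,x)\Bigr)s(tu)-\tfrac{tu}{tu-x}\,s(x)
\]
with the property that the entire recursion is equivalent to the \emph{symmetry} $\cR(t^{-1},tu,x)=\cR(t,u,x)$ (Proposition \ref{reformulation}). The point, made explicit in the Remark following that proposition, is that $\cR$ involves only the factor $s(tu)=\sum_n (tu)^n s[n]$ multiplying $\cP$, so checking the symmetry requires only the \emph{single} Pieri rule for multiplication by $s[n]$, not the double product $(e_p h_q)\cdot s_\lambda$ you propose. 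After extracting the coefficient of $t^i u^d x^m$, the paper is left comparing two finite sums $\Psi(i,r,m)$ and $\Phi(i,r,m)$ (with $r=d-2i$) of single Pieri products $s[k]\cdot s[\ldots,2^{i-k-1}]$, and matches their Schur coefficients $s[A,B,C,2^D]$ by a direct case analysis on the $\min/\max$ interlacing constraints (Lemma \ref{maxiff} and the four cases following it). There is no induction on $i$ and no sign-reversing involution.

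Your offhand remark about an ``auxiliary plethystic substitution \ldots\ to collapse the alternating sums'' gestures in the right direction, but without the explicit $s(u)$-clearing step you would be facing the full alternating sum over both a horizontal and a vertical Pieri strip, and the ``massive cancellation'' you anticipate would be considerably harder to organize than what the paper actually has to do.
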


\begin{corollary}
Conjecture \ref{positivity} holds for all uniform matroids.
\end{corollary}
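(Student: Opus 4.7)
The plan is to induct on the rank $d$ using Theorem \ref{def}(3) applied to $M = U_{m,d}$ with $W = S_{m+d}$. The $S_{m+d}$-orbits on the lattice of flats of $U_{m,d}$ are indexed by subset sizes $k \in \{0,\ldots,d-1\}$ together with the top flat; a representative size-$k$ flat $F$ has stabilizer $S_k \times S_{m+d-k}$, localization $M_F = U_{0,k}$ (the Boolean matroid of rank $k$), and restriction $M^F = U_{m,d-k}$. Splitting off the $F=\emptyset$ term, the recursion reads
\begin{equation*}
t^d P_{m,d}(t^{-1}) - P_{m,d}(t) = H_{m,d}(t) + \sum_{k=1}^{d-1}\Ind_{S_k\times S_{m+d-k}}^{S_{m+d}}\!\left(H^{S_k}_{U_{0,k}}(t)\otimes P_{m,d-k}(t)\right),
\end{equation*}
and the degree bound $\deg P_{m,d} < d/2$ pins $P_{m,d}$ down uniquely.

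First I would compute the inputs in closed form. Since every subset of size $<d$ is a flat in $U_{m,d}$ and all circuits have size $d+1$, the degree-$p$ Orlik--Solomon space for $p<d$ is simply $\bigwedge^p\C^{m+d}$, which decomposes under $S_{m+d}$ as $V[m+d-p,1^p] \oplus V[m+d-p+1,1^{p-1}]$ (for $p\geq 1$). The top-degree piece $\OS^{S_{m+d}}_{U_{m,d},\,d}$ is then forced, since Lemma \ref{one} requires $H_{m,d}(1)=0$. The equivariant characteristic polynomial $H^{S_k}_{U_{0,k}}(t)$ of the Boolean matroid is similarly an explicit alternating sum of hook Specht modules.

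Next I would translate everything to the ring of symmetric functions via the Frobenius characteristic, so that induction products become ordinary multiplication in $\Lambda$. Packaging $P_{m,d}$, $H_{m,d}$, and the Boolean factors into generating functions in two variables $q,r$ tracking $m$ and $d$, the recursion becomes a single functional equation among power series in $\Lambda[q,r,t,t^{-1}]$, and the task reduces to solving it subject to the degree bound on $t$.

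Finally I would verify that the candidate in Theorem \ref{uniform} satisfies the functional equation. Under Frobenius, each $V[d+m-2i-b+1,b+1,2^{i-1}]$ becomes a Schur function of a specific three-row-plus-rectangle shape, and tensoring with the $H^{S_k}_{U_{0,k}}(t)$ factors produces, by the Pieri rule, sums indexed by vertical strips of size $k$. Summing over $k$ and $b$ is designed to telescope: contributions outside the target shape cancel in pairs between adjacent values of $k$, and what survives reassembles into $t^d P_{m,d}(t^{-1}) - P_{m,d}(t) - H_{m,d}(t)$. The main obstacle is arranging the generating functions so that this cancellation is transparent rather than a combinatorial miracle; once the packaging is correct, the verification is a short sequence of Pieri-type identities, matching the authors' description of the proof technique.
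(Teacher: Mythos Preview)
Your proposal is correct in spirit, but you are re-proving Theorem \ref{uniform} rather than deducing the Corollary from it. In the paper, the Corollary has no separate argument: once Theorem \ref{uniform} is established, each coefficient $C_{m,d,i}$ is explicitly written as a (multiplicity-free) direct sum of irreducible $S_{m+d}$-representations, so it lies in $\Rep(S_{m+d})$ by inspection, and hence $P_{m,d}(t)\in\grRep(S_{m+d})$. That is the entire proof.

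What you have sketched is essentially the paper's proof of Theorem \ref{uniform} itself (Section \ref{sec:uniform}): set up the recursion from Theorem \ref{def}(3), pass to symmetric functions via Frobenius, package everything into a generating function, reformulate the recursion as a palindromy-type functional equation (the paper's Proposition \ref{reformulation}), and then verify that the explicit candidate satisfies it using Pieri. Your outline matches this closely, including the observation that the generating-function repackaging is precisely what converts Littlewood--Richardson computations into tractable Pieri-rule ones. One small caution: your phrase ``designed to telescope'' undersells the actual verification in Section 3.3, which is a somewhat delicate case analysis matching Schur-function multiplicities under the involution $i\leftrightarrow d-i$; it is not a one-line telescoping identity. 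But as a sketch of the strategy, your proposal is accurate and aligned with the paper.
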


\begin{remark}
When $m=1$, Theorem \ref{uniform} specializes to the main result of \cite{PWY}.
\end{remark}

\begin{remark}
One can use Theorem \ref{uniform}, along with the hook length formula for the dimension of $V_\la$,
to compute the coefficients of the ordinary Kazhdan-Lusztig polynomial of $U_{m,d}$.  
(The formula is unenlightening, so we will not reproduce it here.)  This is a computation
that we were unable to do in \cite{EPW}; see Section 2.4 and the appendix of that paper.  Indeed, we still know of no way to compute
these numbers that does not go through Theorem \ref{uniform}.
\end{remark}

\begin{remark}
One immediate consequence of Theorem \ref{uniform} is that,
for any triple $(m,d,i)$, we have $C_{m,d,i} = C_{d-2i,m+2i,i}$.  
This was first observed empirically in the non-equivariant setting by Max Wakefield, 
based on computer calculations.
We still have no philosophical explanation for which this symmetry should exist.
\end{remark}

\begin{remark}\label{stable}
Another consequence of Theorem \ref{uniform} is that
the sequence $(C_{m,d,i})_{d\in\N}$ is uniformly representation stable in the sense of Church and Farb \cite[Definition 2.3]{CF},
with the stable range beginning at $d=m+2i$.
\end{remark}

\subsection{Translating to symmetric functions}
The {\bf Frobenius characteristic} is an isomorphism of vector spaces
$$\ch:\grVRep(S_n)\to \Lambda_n[t],$$
where $\Lambda_n$ is the space of symmetric functions of degree $n$
\cite[Section I.7]{Macdonald}.
It has the property that, given two graded virtual representations $V_1\in \grVRep(S_{n_1})$ and $V_2\in \grVRep(S_{n_2})$, we have
$$\ch \Ind_{S_{n_1}\times S_{n_2}}^{S_{n_1+n_2}}\!\big(V_1\boxtimes V_2\big) = \ch(V_1)\ch(V_2).$$
Let $$\cH_{m,d}(t) := \ch H_{m,d}(t), \qquad \cP_{m,d}(t) := \ch P_{m,d}(t), \and \cC_{m,d,i} := \ch C_{m,d,i}.$$
Applying the Frobenious characteristic to the equation in Theorem \ref{def}(3) (and applying Corollary \ref{zero-one}), 
we obtain the statement
\begin{equation}\label{md}
t^d \cP_{m,d}(t^{-1}) = \cH_{m,d}(t) + \sum_{k=1}^{d}
\cH_{0,d-k}(t) \cP_{m,k}(t).\end{equation}

\subsection{Generating functions}
In this section we will work in the ring $\Lambda[[t,u,x]]$ of completed
symmetric functions with coefficients in the ring of formal power series in $t$, $u$, and $x$.
%Define 
Let $$\cH(t,u,x) := \sum_{d=1}^\infty\sum_{m=0}^\infty \cH_{m,d}(t)u^dx^m
\and \cP(t,u,x) := \sum_{d=1}^\infty\sum_{m=0}^\infty \cP_{m,d}(t)u^dx^m.$$
Then Equation \eqref{md} for all values of $m$ and $d$ is equivalent to the generating function equation
\begin{equation}\label{gen}
\cP(t^{-1},tu,x) = \cH(t,u,x) + \big(1 + \cH(t,u,0)\big)\cP(t,u,x).
\end{equation}

\begin{remark}\label{eq-to-exp}
Once we have Equation \eqref{gen}, we obtain for free the corresponding functional equation involving the 
(non-equivariant) exponential generating functions.
Let $$H(t,u,x) := \sum_{d=1}^\infty\sum_{m=0}^\infty \dim H_{m,d}(t) \frac{u^d x^m}{(d+m)!} = \sum_{d=1}^\infty\sum_{m=0}^\infty \chi_{U_{m,d}}(t) 
\frac{u^d x^m}{(d+m)!}$$
and $$P(t,u,x) := \sum_{d=1}^\infty\sum_{m=0}^\infty \dim P_{m,d}(t) \frac{u^d x^m}{(d+m)!} = \sum_{d=1}^\infty\sum_{m=0}^\infty P_{U_{m,d}}(t) 
\frac{u^d x^m}{(d+m)!}.$$
Then we have \begin{equation}\label{uniform exp}
P(t^{-1},tu,x) = H(t,u,x) + \big(1 + H(t,u,0)\big)P(t,u,x).
\end{equation}
Equation \eqref{uniform exp} follows from Equation \eqref{gen} using the following easy observation:
Let $V_i$ be a representation of $S_{n_i}$ for $i\in\{1,2\}$,
and let $V = \Ind_{S_{n_1}\times S_{n_2}}^{S_{n_1+n_2}}\!\left(V_1\boxtimes V_2\right)$.
Then $\frac{\dim V}{(n_1+n_2)!} = \frac{\dim V_1}{n_1!}\cdot\frac{\dim V_2}{n_2!}$.
\end{remark}

The remainder of this section is devoted to deriving explicit expressions for $\cH(t,u,x)$ and $H(t,u,x)$.
Given a partition $\la$ of $n$, let $s[\la] := \ch V[\la]$ be the Schur function associated with $\la$.
Let $$s(t) := \sum_{n=0}^\infty t^n s[n],$$
and recall the well-known identity $$s(-t)^{-1} = \sum_{n=0}^\infty t^n s[1^n].$$

\begin{lemma}\label{weird}
$\displaystyle \sum_{e=0}^\infty\sum_{m=0}^\infty t^{e}u^{m} s[m+1,1^{e}] = \frac{1}{t+u}\left(-1 + \frac{s(u)}{s(-t)}\right).$
\end{lemma}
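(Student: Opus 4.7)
The plan is to reduce everything to a Jacobi–Trudi expansion for hook Schur functions and then evaluate the resulting double sum by a geometric–series computation.

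First I would use the standard hook–shape identity
\[
s[m+1,1^{e}] \;=\; \sum_{j=0}^{e} (-1)^{j}\, h_{m+1+j}\, e_{e-j},
\]
which is a one-line consequence of the Jacobi–Trudi determinant (or equivalently of the Pieri rule expansion of $h_{m+1+j}\cdot e_{e-j}$, telescoping all but the hook). In the notation of the paper, $h_n = s[n]$ and $e_n = s[1^{n}]$, so this reads $s[m+1,1^{e}] = \sum_{j} (-1)^{j} s[m+1+j]\, s[1^{e-j}]$.

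Next I would substitute this into the left–hand side and interchange the order of summation, switching from $(e,m,j)$ to $(j,k,m)$ with $k = e-j$:
\[
\sum_{e,m\ge 0} t^{e} u^{m} s[m+1,1^{e}]
\;=\; \Bigl(\sum_{k\ge 0} t^{k} e_{k}\Bigr)\,\sum_{j,m\ge 0} (-t)^{j}\, u^{m}\, h_{m+1+j}.
\]
The first factor is exactly $s(-t)^{-1}$ by the identity quoted just before the lemma. For the second factor I would re-index by $n = m+j+1 \ge 1$ (so that $j$ runs from $0$ to $n-1$ and $m = n-1-j$), which turns the inner sum into a finite geometric progression:
\[
\sum_{j,m\ge 0} (-t)^{j} u^{m} h_{m+1+j}
\;=\; \sum_{n\ge 1} h_{n}\, \frac{u^{n}-(-t)^{n}}{u+t}
\;=\; \frac{s(u)-s(-t)}{t+u},
\]
using $s(u) = \sum_{n\ge 0} u^{n} h_{n}$ and $s(-t) = \sum_{n\ge 0} (-t)^{n} h_{n}$ together with the vanishing of the $n=0$ term.

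Combining these pieces gives
\[
\sum_{e,m\ge 0} t^{e} u^{m} s[m+1,1^{e}]
\;=\; \frac{1}{s(-t)}\cdot\frac{s(u)-s(-t)}{t+u}
\;=\; \frac{1}{t+u}\Bigl(-1 + \frac{s(u)}{s(-t)}\Bigr),
\]
which is the claimed identity. I do not expect any serious obstacle here; the only real bookkeeping step is the re-indexing $n = m+j+1$ that collapses the two-variable inner sum into a geometric series, and that is where all of the work is concentrated.
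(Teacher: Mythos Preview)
Your proof is correct. The approaches differ in direction: the paper starts from the right-hand side, expands $s(u)/s(-t)$ as the double sum $\sum_{j,k} t^{j} u^{k}\, s[1^{j}]\, s[k]$, applies the Pieri rule $s[1^{j}]\, s[k] = s[k+1,1^{j-1}] + s[k,1^{j}]$ term by term, and then observes that the two resulting sums are $t$ and $u$ times the left-hand side. You instead start from the left-hand side and invoke the hook Jacobi--Trudi identity $s[m+1,1^{e}] = \sum_{j}(-1)^{j} h_{m+1+j}\, e_{e-j}$, which lets the sum factor cleanly and reduces the remaining piece to a geometric series. The two arguments are essentially inverse to one another---Pieri builds hooks from products $e_{j} h_{k}$, while your hook identity decomposes hooks back into such products---so neither is deeper than the other. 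Your version has the mild advantage that the factorization makes the appearance of $s(-t)^{-1}$ immediate rather than requiring a re-indexing trick at the end; the paper's version has the advantage of using only the Pieri rule, which is the tool used throughout the rest of Section~3.
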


\begin{proof}
We have
\begin{eqnarray*}-1 + \frac{s(u)}{s(-t)} &=&  -1 + \left(\sum_{j=0}^\infty t^{j} s[1^{j}]\right)\left(\sum_{k=0}^\infty u^{k} s[k]\right)\\
&=& -1 + \sum_{j=0}^\infty\sum_{k=0}^\infty t^ju^k s[1^{j}]s[k]\\
&=& \sum_{j=1}^\infty\sum_{k=0}^\infty t^ju^ks[k+1,1^{j-1}] + \sum_{j=0}^\infty\sum_{k=1}^\infty t^ju^ks[k,1^j],
\end{eqnarray*}
where the last equality follows from the Pieri rule.  Next, observe that
$$\sum_{j=1}^\infty\sum_{k=0}^\infty t^ju^k s[k+1,1^{j-1}] = t\sum_{e=0}^\infty\sum_{m=0}^\infty t^{e}u^{m} s[m+1,1^{e}]$$
and $$\sum_{j=0}^\infty\sum_{k=1}^\infty t^ju^k s[k,1^j] = u\sum_{e=0}^\infty\sum_{m=0}^\infty t^{e}u^{m} s[m+1,1^{e}].$$
Adding them together and dividing by $t+u$, we obtain the desired equation.
\end{proof}

\begin{proposition}\label{htux}
We have $$\cH(t,u,x) = \frac{u}{u-x}\left(-1 + \frac{s(x)}{s(u)}\right) + \frac{tu}{tu-x}\left(\frac{s(tu)}{s(u)} - \frac{s(x)}{s(u)}\right)$$
and $$1 + \cH(t,u,0) = \frac{s(tu)}{s(u)}.$$
\end{proposition}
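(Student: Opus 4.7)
The plan is to apply Lemma \ref{Brieskorn} to obtain an explicit formula for $\cH_{m,d}(t)$ as a sum of Schur functions, and then to sum over $d$ and $m$ in closed form; the identity $1 + \cH(t,u,0) = s(tu)/s(u)$ will then follow by specialization at $x = 0$.

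First, I would enumerate the $S_{m+d}$-orbits on the lattice of flats of $U_{m,d}$. For each $k$ with $0 \leq k \leq d-1$, the rank-$k$ flats are precisely the $k$-subsets of $\cI$, forming a single orbit with stabilizer $S_k \times S_{m+d-k}$; the localization $M_F = U_{0,k}$ is Boolean, and its top Orlik-Solomon space is the sign representation of $S_k$ tensored with the trivial representation of $S_{m+d-k}$, which induces to a representation of $S_{m+d}$ with Frobenius characteristic $s[1^k]\, s[m+d-k]$. The only remaining orbit consists of the top flat, whose contribution to Lemma \ref{Brieskorn} is $(-1)^d$ times the top Orlik-Solomon module of $U_{m,d}$ itself---a representation that is difficult to describe explicitly. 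To bypass this, I would use Lemma \ref{one}: evaluating Lemma \ref{Brieskorn} at $t = 1$ forces the top-flat contribution to equal minus the sum of the other contributions at $t = 1$. Substituting back and applying the Frobenius characteristic gives
\[
\cH_{m,d}(t) = \sum_{k=0}^{d-1} (-1)^k (t^{d-k} - 1)\, s[1^k]\, s[m+d-k].
\]

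Next, I would multiply by $u^d x^m$ and sum over $d \geq 1$ and $m \geq 0$. Reindexing by $j := k$ and $e := d - k$, the generating function factors as
\[
\cH(t,u,x) = \Bigl(\sum_{j \geq 0} (-u)^j s[1^j]\Bigr) \cdot \Bigl(\sum_{e \geq 1,\, m \geq 0} (t^e - 1)\, u^e x^m\, s[m+e]\Bigr).
\]
The first factor equals $s(u)^{-1}$ by the identity $s(-v)^{-1} = \sum_n v^n s[1^n]$ recalled before Lemma \ref{weird}. For the second, I would fix $a \in \{tu, u\}$ and collect on $n := m + e$; the inner finite geometric series evaluates to $\sum_{e=1}^n a^e x^{n-e} = a(a^n - x^n)/(a - x)$, so $\sum_{e \geq 1,\, m \geq 0} a^e x^m\, s[m+e] = \tfrac{a}{a - x}(s(a) - s(x))$. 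Taking the difference of the $a = tu$ and $a = u$ contributions and dividing by $s(u)$ yields the stated formula for $\cH(t,u,x)$. Setting $x = 0$ and using $s(0) = 1$ then gives $1 + \cH(t,u,0) = s(tu)/s(u)$ after a short cancellation.

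The main obstacle is careful bookkeeping: tracking signs when eliminating the top-flat contribution via Lemma \ref{one}, and collapsing the resulting double sum into the closed form $\tfrac{a}{a - x}(s(a) - s(x))$. The key observation that makes the calculation tractable is that we never need to identify the top Orlik-Solomon module of $U_{m,d}$ as an $S_{m+d}$-representation: its contribution is pinned down purely by the requirement $\cH_{m,d}(1) = 0$.
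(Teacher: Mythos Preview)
Your argument is correct, and it arrives at the same closed form through a slightly different but equally valid route. The paper works directly from the definition of $H^W_M(t)$: it identifies $\OS^{S_{m+d}}_{U_{m,d},i}=\wedge^i\C^{m+d}$ for $i<d$ (whose Frobenius characteristic is $s[1^i]s[m+d-i]$ by Pieri, matching your rank-$k$ flat contribution via Lemma~\ref{Brieskorn}) and explicitly identifies the top piece as $\OS^{S_{m+d}}_{U_{m,d},d}=V[m+1,1^{d-1}]$. It then separates the sum into the top-degree part and the rest, invoking Lemma~\ref{weird} to collapse $\sum_{d,m}(-u)^dx^m s[m+1,1^{d-1}]$ into $\frac{u}{u-x}\bigl(-1+s(x)/s(u)\bigr)$.

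Your approach replaces the explicit identification of the top Orlik--Solomon piece by the vanishing $\cH_{m,d}(1)=0$ from Lemma~\ref{one}, which forces the closed expression $\cH_{m,d}(t)=\sum_{k=0}^{d-1}(-1)^k(t^{d-k}-1)s[1^k]s[m+d-k]$ and reduces the summation to an elementary geometric series rather than Lemma~\ref{weird}. The trade-off: the paper's route records the independently useful fact $\OS^{S_{m+d}}_{U_{m,d},d}=V[m+1,1^{d-1}]$ (and indeed needs Lemma~\ref{weird} for it), whereas your route is more self-contained and makes the final geometric-series step completely transparent. Your remark that the top Orlik--Solomon module is ``difficult to describe explicitly'' is a slight overstatement---it follows from a telescoping Euler-characteristic argument---but your point that it need not be identified at all is well taken.
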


\begin{proof}
% The proof will be formally similar to the proof of Proposition \ref{htuz}, but it will eventually use Proposition \ref{htuz}.
If $i<d$, then 
$$\OS^{S_{m+d}}_{U_{m,d},i} = \wedge^i \C^{m+d} = V[m+d-i,1^i] + V[m+d-i+1,1^{i-1}].$$
% (Here we adopt the convention that $V[m+d-i+1,1^{i-1}]=0$ if $i=0$.) 
Applying the Frobenius characteristic, we have
$$\ch \OS^{S_{m+d}}_{U_{m,d},i} = s[m+d-i,1^i] + s[m+d-i+1,1^{i-1}] = s[1^i]s[m+d-i],$$
where the second equation follows from the Pieri rule.
We also have
$$\OS^{S_{m+d}}_{U_{m,d},d} = V[m+1,1^{d-1}],$$
and therefore 
$$\ch \OS^{S_{m+d}}_{U_{m,d},d} = s[m+1,1^{d-1}].$$
By definition,
$$\cH(t,u,x) = \sum_{d=1}^\infty\sum_{m=0}^\infty\sum_{i=0}^\infty (-t^{-1})^i (tu)^d x^m \ch \OS^{S_{m+d}}_{U_{m,d},i}.$$
Letting $k=d-i$, this tells us that
\begin{eqnarray*}
\cH(t,u,x) %&=& \sum_{d=1}^\infty\sum_{m=0}^\infty\sum_{i=0}^\infty (-t^{-1})^i (tu)^d x^m \ch \OS^{S_{m+d}}_{U_{m,d},i}\\
&=& \sum_{d=1}^\infty\sum_{m=0}^\infty (-u)^d x^m \ch \OS^{S_{m+d}}_{U_{m,d},d}
+ \sum_{k=1}^\infty\sum_{m=0}^\infty\sum_{i=0}^\infty (-t^{-1})^i (tu)^{i+k} x^m \ch \OS^{S_{m+i+k}}_{U_{m,i+k},i}\\
&=& \sum_{d=1}^\infty\sum_{m=0}^\infty (-u)^d x^m s[m+1,1^{d-1}]
+ \sum_{k=1}^\infty\sum_{m=0}^\infty\sum_{i=0}^\infty (-u)^i (tu)^k x^m s[1^i]s[m+k]\\
&=& \sum_{d=1}^\infty\sum_{m=0}^\infty (-u)^d x^m s[m+1,1^{d-1}]
+ \left(\sum_{i=0}^\infty (-u)^i s[1^i]\right)\left(\sum_{k=1}^\infty\sum_{m=0}^\infty (tu)^k x^m s[m+k]\right)\\
&=& \frac{u}{u-x}\left(-1 + \frac{s(x)}{s(u)}\right)
+ \frac{tu}{tu-x}\cdot\frac{s(tu)-s(x)}{s(u)},
\end{eqnarray*}
where the last equation follows from
Lemma \ref{weird}.  The second statement is obtained from the first by setting $x$ equal to zero.
\end{proof}

The following Corollary follows immediately from Proposition \ref{htux} as in Remark \ref{eq-to-exp}.
We use the fact that $s(t)$ is the Frobenius characteristic of the trivial representation in every degree,
so the exponential generating function for its dimensions is $e^t$.

\begin{corollary}\label{exp-htux}
We have $$H(t,u,x) = \frac{u}{u-x}\left(-1 + e^{x-u}\right) + \frac{tu}{tu-x}\left(e^{tu-u} - e^{x-u}\right)$$
and $$1 + H(t,u,0) = e^{tu-u}.$$
\end{corollary}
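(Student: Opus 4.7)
The plan is to derive Corollary \ref{exp-htux} from Proposition \ref{htux} by passing from symmetric functions to scalars through the \emph{exponential specialization}. Define $\epsilon \colon \Lambda \to \Q$ to be the linear map sending $\ch V$ (for $V$ a virtual representation of $S_n$) to $\dim V/n!$, and extend coefficientwise to $\epsilon \colon \Lambda[[t,u,x]] \to \Q[[t,u,x]]$. By the definitions of $\cH$ and $H$, one has $\epsilon(\cH(t,u,x)) = H(t,u,x)$ on the nose, so it suffices to apply $\epsilon$ to both sides of the identity in Proposition \ref{htux}.

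Two facts about $\epsilon$ drive everything. First, $\epsilon$ is a ring homomorphism: this is exactly the ``easy observation'' at the end of Remark \ref{eq-to-exp}, which says that if $V = \Ind_{S_{n_1}\times S_{n_2}}^{S_{n_1+n_2}}(V_1\boxtimes V_2)$ then $\dim V/(n_1+n_2)! = (\dim V_1/n_1!)(\dim V_2/n_2!)$, matching $\ch V = \ch V_1\cdot\ch V_2$ under $\ch$. Second, because $s[n]$ is the Frobenius characteristic of the one-dimensional trivial representation of $S_n$, one has $\epsilon(s[n]) = 1/n!$, and therefore $\epsilon(s(t)) = \sum_{n\ge 0} t^n/n! = e^t$. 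By multiplicativity, $\epsilon(s(u)^{-1}) = e^{-u}$, and more generally the Schur quotients $s(x)/s(u)$ and $s(tu)/s(u)$ appearing in Proposition \ref{htux} map to $e^{x-u}$ and $e^{tu-u}$ respectively.

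The rest is mechanical substitution: the prefactors $u/(u-x)$ and $tu/(tu-x)$ are scalars and pass through $\epsilon$ unchanged, so the first formula of Corollary \ref{exp-htux} is immediate; setting $x=0$ collapses both prefactors to $1$ and cancels the two copies of $e^{-u}$ to yield $1 + H(t,u,0) = e^{tu-u}$. There is essentially no hard step in this argument --- Corollary \ref{exp-htux} is transport of structure along $\epsilon$ --- and the only point requiring any care is that the ratios in Proposition \ref{htux} are genuine elements of $\Lambda[[t,u,x]]$ (which holds because $s(u)$ has constant term $1$), so that $\epsilon$ may be applied coefficientwise.
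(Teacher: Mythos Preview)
Your proposal is correct and follows essentially the same approach as the paper, which simply says the corollary ``follows immediately from Proposition \ref{htux} as in Remark \ref{eq-to-exp}'' together with the observation that $s(t)\mapsto e^t$ under the passage to exponential generating functions. You have just made this passage explicit by naming the exponential specialization $\epsilon$ and checking its multiplicativity; one tiny caveat is that the individual factor $u/(u-x)$ is not itself an element of $\Q[[t,u,x]]$, so the justification ``$s(u)$ has constant term $1$'' is not quite the whole story---rather, each full summand on the right-hand side of Proposition \ref{htux} is a genuine power series (as is visible from its derivation via Lemma \ref{weird}), and $\epsilon$ is applied to those.
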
 

% We next use Proposition \ref{htux} to rewrite Equation \eqref{gen}.  
Proposition \ref{htux} combines with Equation \eqref{gen} to tell us that % $\cP(t^{-1},tu,x)$ is equal to
$$\cP(t^{-1},tu,x) = \frac{u}{u-x}\left(-1 + \frac{s(x)}{s(u)}\right) + \frac{tu}{tu-x}\left(\frac{s(tu)}{s(u)} - \frac{s(x)}{s(u)}\right) + \cP(t,u,x) \frac{s(tu)}{s(u)}.$$
Rearranging terms, this is equivalent to the equation
\begin{equation}\label{sym}
\left(\frac{u}{u-x}+\cP(t^{-1},tu,x)\right)s(u)
- \frac{u}{u-x}s(x) = 
\left(\frac{tu}{tu-x} + \cP(t,u,x)\right) s(tu)
- \frac{tu}{tu-x}s(x).\end{equation}
Let \begin{eqnarray*}\cR(t,u,x) &:=& \left(\frac{tu}{tu-x} + \cP(t,u,x)\right)s(tu)
- \frac{tu}{tu-x} s(x)\\
&=& \left(\frac{tu}{tu-x}+ \cP(t,u,x)\right)\sum_{n=0}^\infty (tu)^n s[n] - \frac{tu}{tu-x}\sum_{n=0}^\infty x^n s[n]\end{eqnarray*}
 be the expression on the right-hand side of Equation \eqref{sym}.
Then Equation \eqref{sym} becomes $$\cR(t^{-1},tu,x) = \cR(t,u,x).$$
The results of this section can be summarized as follows.

\begin{proposition}\label{reformulation}
The element $\cP(t,u,x)\in \Lambda[[t,u,x]]$ is uniquely characterized by the following properties:
\begin{itemize}
\item $\cP(t,0,x) = 0$,
\item the coefficient of $t^iu^d$ in $\cP(t,u,x)$ is zero if $2i\geq d$,
\item $\cR(t^{-1},tu,x) = \cR(t,u,x)$, where $\cR(t,u,x)$ is defined above.
\end{itemize}
\end{proposition}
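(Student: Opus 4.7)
The plan is to establish both existence and uniqueness. For existence, I verify that the $\cP(t,u,x)$ constructed earlier in the section satisfies the three listed conditions. The first condition $\cP(t,0,x)=0$ is immediate because the defining sum ranges over $d\geq 1$. The second condition aggregates the degree bound $\deg \cP_{m,d}(t)<d/2$ from Theorem \ref{def}(2) across all $m$. The third condition is just a re-encoding of Equation \eqref{sym}, which was derived from Equation \eqref{gen} by combining Proposition \ref{htux} with the rearrangement shown in the text.

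For uniqueness, I would suppose $\cP'(t,u,x)\in\Lambda[[t,u,x]]$ satisfies all three conditions and show $\cP'=\cP$. The key step is to run the equivalence in the other direction: I expand the symmetry $\cR(t^{-1},tu,x)=\cR(t,u,x)$ using the definition of $\cR$, move the $s(x)/s(u)$ and $s(x)/s(tu)$ terms, and invoke Proposition \ref{htux} to recognize the resulting expression as
$$\cP'(t^{-1},tu,x)=\cH(t,u,x)+\bigl(1+\cH(t,u,0)\bigr)\cP'(t,u,x).$$
Extracting the coefficient of $u^dx^m$ then reproduces Equation \eqref{md} for $\cP'_{m,d}(t)$:
$$t^d\cP'_{m,d}(t^{-1})=\cH_{m,d}(t)+\sum_{k=1}^{d}\cH_{0,d-k}(t)\cP'_{m,k}(t).$$

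Now I fix $m\geq 0$ and induct on $d$. Since $\cH_{0,0}(t)=1$, the previous display may be rewritten as
$$t^d\cP'_{m,d}(t^{-1})-\cP'_{m,d}(t)=\cH_{m,d}(t)+\sum_{k=1}^{d-1}\cH_{0,d-k}(t)\cP'_{m,k}(t),$$
whose right-hand side is determined by the inductive hypothesis (and by the data $\cH$, which is fixed). The left-hand side, combined with the degree constraint $\deg\cP'_{m,d}(t)<d/2$, uniquely determines $\cP'_{m,d}(t)$, because the support of $-\cP'_{m,d}(t)$ lies in $t$-degrees strictly below $d/2$ while the support of $t^d\cP'_{m,d}(t^{-1})$ lies in $t$-degrees strictly above $d/2$, so the two pieces can be read off separately. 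This matches the recursion that determines $\cP_{m,d}(t)$, so $\cP'_{m,d}=\cP_{m,d}$ for every $d$, completing the induction.

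The main potential obstacle is the bookkeeping in the equivalence between the symmetry $\cR(t^{-1},tu,x)=\cR(t,u,x)$ and Equation \eqref{gen}; clearing the $\tfrac{tu}{tu-x}$ and $\tfrac{u}{u-x}$ denominators and matching the resulting combinations of $s(x)$, $s(u)$, and $s(tu)$ against the closed form in Proposition \ref{htux} is routine but requires care. Once this algebraic identification is in place, the rest of the argument is a transparent induction parallel to the proof of Theorem \ref{def}, merely repackaged in generating-function form.
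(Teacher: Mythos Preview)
Your proposal is correct and matches the paper's approach. The paper presents Proposition \ref{reformulation} as a summary of the section (``The results of this section can be summarized as follows'') with no explicit proof, leaving implicit exactly the two steps you spell out: existence follows because the three bullets repackage the definition of $\cP$, the degree bound from Theorem \ref{def}(2), and Equation \eqref{sym}; uniqueness follows because the derivation of \eqref{sym} from \eqref{gen} is reversible (multiplication by the unit $s(u)$), and \eqref{gen} together with the degree bound determines each $\cP_{m,d}$ by the same induction that underlies Theorem \ref{def}. Your write-up simply makes these implicit steps explicit.
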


\begin{remark}
It is interesting to observe exactly what our manipulations of generating functions has bought us.
The straightforward apporach to proving Theorem \ref{uniform} would have been to apply Proposition \ref{coef}
and proceed by induction on $d$.  This works in theory, but it involves repeated applications of the Littlewood-Richardson
rule for hooks, and the combinatorics very quickly gets out of hand.
Instead, we will prove Theorem \ref{uniform} by taking our ``guess" for $\cP(t,u,x)$ and verifying the equation 
$\cR(t^{-1},tu,x) = \cR(t,u,x)$.  From the definition of $\cR(t,u,x)$, we see that this will involve repeated
applications of the Pieri rule, which is much simpler than the general Littlewood-Richardson rule.  This simplification
is exactly what makes our proof possible.
\end{remark}

\subsection{Proving the theorem}
We are now ready to prove Theorem \ref{uniform}.
Let $$\cP'(t,u,x) := \sum_{d=1}^\infty u^ds[d] + \sum_{i=0}^\infty\sum_{d=1}^\infty\sum_{m=0}^\infty t^iu^dx^m \sum_{b=1}^{\min(m,d-2i)} s[d+m-2i-b+1,b+1,2^{i-1}].$$
%  + \sum_{d=1}^\infty\sum_{m=0}^\infty u^dx^m s[d+m]$$
Here and throughout this section we adopt the notational convention that $$s[a,2,2^{-1}] = s[a] \and s[a,b+1,2^{-1}] = 0\;\;\text{if $b>1$};$$
in particular, the coefficient of $t^0u^dx^m$ in $\cP'(t,u,x)$ is equal to $s[d+m]$ for any $d\geq 1$ and $m\geq 0$.
Let $$\cR'(t,u,x) := \left(\frac{tu}{tu-x}+ \cP'(t,u,x)\right)\sum_{n=0}^\infty (tu)^n s[n] - \frac{tu}{tu-x}\sum_{n=0}^\infty x^n s[n].$$
By Proposition \ref{reformulation}, Theorem \ref{uniform} is equivalent to the statement that 
$\cR'(t^{-1},tu,x) = \cR'(t,u,x)$.

The coefficient of $t^i u^d x^m$ in $\cR'(t,u,x)$ is equal to 
% the coefficient of $t^i u^d x^m$ in $\cP'(t,u,x)\sum_{n=0}^\infty (tu)^n s[n]$
% plus the coefficient of $t^i u^d x^m$ in $\frac{tu}{tu-x}\sum_{n=0}^\infty \left((tu)^n-x^n\right) s[n]$
\begin{eqnarray*}
\sum_{b=1}^m\sum_{k=b-d+2i}^i s[k] s[d+k+m-2i-b+1,b+1,2^{i-k-1}]
\;\;&+&\;\; \begin{cases}
s[i]s[d-i]\;\;\;\text{if $m=0<d-i$}\\
0\;\;\;\text{otherwise}
\end{cases}\\
&+&\;\; \begin{cases}
s[m+d]\;\;\;\text{if $i=d>0$}\\
0\;\;\;\text{otherwise.}
\end{cases}\end{eqnarray*}
Thus we need to show that this expression is invariant under the substitution $i\leftrightarrow d-i$.
If $d=0$ or $i>d$, the expression is equal to zero.  If $d>0$ and $i=0$ or $i=d$, then the expression is equal to $s[d+m]$.
Thus, we may assume that $0<i<d$.
If $m=0$, the expression is equal to $s[i]s[d-i]$, which is clearly invariant.  Thus, we may further assume that $m\neq 0$, which means
that we can restrict our attention to the expression
$$\sum_{b=1}^m\sum_{k=b-d+2i}^i s[k] s[d+k+m-2i-b+1,b+1,2^{i-k-1}].$$
Letting $r=d-2i$, we can rewrite this expression as $$\Psi(i,r,m) := \sum_{b=1}^m\sum_{k=b-r}^i s[k] s[r+k+m-b+1,b+1,2^{i-k-1}],$$
and we want to show that it is equal to
$$\Phi(i,r,m) := \Psi(i+r,-r,m) = \sum_{b=1}^m\sum_{j=b}^{i} s[j+r] s[j+m-b+1,b+1,2^{i-j-1}].$$

The Pieri rule tells us that any Schur function appearing $\Psi(i,r,m)$
must be of the form $s[A,B,C,2^D]$ or $s[A,B,C,2^{D},1]$, 
where we continue to adhere to our notational convention:
$$s[A,B,2,2^{-1}] = s[A,B],\qquad s[A,2,2,2^{-2}] = s[A],$$
and so on.  

Let us focus on the case of $s[A,B,C,2^D]$ with $D\geq 0$ and $A+B+C+2D = 2i+r+m$.
The Schur function $s[A,B,C,2^D]$ can appear in the $k$ summand of $\Psi(i,r,m)$ when $k=i-D-2$ or $k=i-D-1$.
In each of these summands, the number of times that $s[A,B,C,2^D]$ is equal to the number of values
of $b\leq \min(m,k+r)$ satisfying the inequalities 
\begin{equation*}\label{ineqs}
A \geq r+k+m-b+1\geq B\geq b+1 \geq C \geq 2,
\end{equation*}
which ensure that the partitions $[r+k+m-b+1,b+1,2^{i-k-1}]$ and $[A,B,C,2^D]$ interlace.
More precisely, 
let $$\varepsilon_1 := \min(m,r+i-D-1+m-B,B-1,i-D-2+r),\qquad \varepsilon_2 := \min(m,r+i-D+m-B,B-1,i-D-1+r),$$
$$\Upsilon_1 := \max(C-1,r+i-D-1+m-A),\and \Upsilon_2 := \max(C-1,r+i-D+m-A).$$
Then the coefficient
of $s[A,B,C,2^D]$ in $\Psi(i,r,m)$ is equal to
$$\max(0, \varepsilon_1-\Upsilon_1+1) + \max(0, \varepsilon_2-\Upsilon_2+1),$$
where the first summand represents the number of possible values for $b$ when $k=i-D-2$, and the second summand
represents the number of possible values for $b$ when $k=i-D-1$.

Similarly, let
$$\varepsilon'_1 := \min(m,i-D-1+m-B,B-1,i-D-2),\qquad \varepsilon'_2 := \min(m,i-D+m-B,B-1,i-D-1),$$
$$\Upsilon'_1 := \max(C-1,i-D-1+m-A),\and \Upsilon'_2 := \max(C-1,i-D+m-A).$$
Then the coefficient
of $s[A,B,C,2^D]$ in $\Phi(i,r,m)$ is equal to
$$\max(0, \varepsilon'_1-\Upsilon'_1+1) + \max(0, \varepsilon'_2-\Upsilon'_2+1),$$
where the first summand represents the number of possible values for $b$ when $j=i-D-2$, and the second summand
represents the number of possible values for $b$ when $j=i-D-1$.
Our plan is to show that $$\varepsilon_1-\Upsilon_1 = \varepsilon'_2-\Upsilon'_2
\and
\varepsilon_2-\Upsilon_2 = \varepsilon'_1-\Upsilon'_1,$$
which will tell us that $s[A,B,C,2^D]$ appears with the same coefficient in $\Psi(i,r,m)$ and $\Phi(i,r,m)$.

\begin{lemma}\label{maxiff}  We have 
\begin{enumerate}
\item $\Upsilon_1 = C-1$ if and only if $\varepsilon'_2 = \min(m,B-1)$,
\item $\Upsilon_2 = C-1$ if and only if $\varepsilon'_1 =\min(m,B-1)$,
\item $\Upsilon'_1 = C-1$ if and only if $\varepsilon_2=\min(m,B-1)$,
\item $\Upsilon'_2 = C-1$ if and only if $\varepsilon_1=\min(m,B-1)$.
\end{enumerate}
\end{lemma}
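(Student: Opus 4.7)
The plan is to reduce each of the four equivalences to a single linear inequality on $B$ and then verify that both sides produce the same inequality via a short uniform case analysis. The key input is that since $s[A,B,C,2^D]$ arises by Pieri's rule in $\Psi(i,r,m)$ from a product of total degree $2i+r+m$, we have the identity
\[
A + B + C + 2D \;=\; 2i + r + m,
\]
or equivalently $A + C = 2i + r + m - B - 2D$.

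Substituting this into the defining $\max$ of each $\Upsilon$ converts the condition ``the second argument of the $\max$ is $\leq C-1$'' into a linear bound on $B$. Specifically,
\[
\Upsilon_1 = C-1 \iff B \leq i-D, \qquad \Upsilon_2 = C-1 \iff B \leq i-D-1,
\]
\[
\Upsilon'_1 = C-1 \iff B \leq i+r-D, \qquad \Upsilon'_2 = C-1 \iff B \leq i+r-D-1,
\]
each verified by a single-line calculation (e.g.\ $\Upsilon_1 = C-1$ iff $A+C\geq r+i-D+m$, which via the identity is $B\leq i-D$).

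For the other side of each equivalence, each $\varepsilon$ has the form $\min(m, X, B-1, Y)$, and $\varepsilon = \min(m, B-1)$ holds precisely when both $X \geq \min(m, B-1)$ and $Y \geq \min(m, B-1)$. I would split on whether $B \leq m+1$ (so $\min(m,B-1) = B-1$) or $B \geq m+1$ (so $\min(m,B-1) = m$) and collect the two linear inequalities in each case. As an illustration, for equivalence (1) the requirement $\varepsilon'_2 = \min(m, B-1)$ becomes $\{2B \leq i-D+m+1,\ B \leq i-D\}$ in the first case and $\{B \leq i-D,\ m \leq i-D-1\}$ in the second. In both cases the system is equivalent to $B \leq i-D$: the forward direction is immediate, and for the reverse, $B \leq i-D$ combined with $B \leq m+1$ yields $2B \leq (i-D)+(m+1)$, while $B \leq i-D$ combined with $B \geq m+1$ yields $m \leq B-1 \leq i-D-1$.

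Equivalences (2), (3), (4) follow by exactly the same two-case pattern and produce the bounds $B \leq i-D-1$, $B \leq i+r-D$, $B \leq i+r-D-1$ respectively, matching the corresponding $\Upsilon$ computation from the previous paragraph. The main obstacle is not conceptual but organizational: four parallel reductions, each with two cases, must be carried out consistently, and one must apply the case-combination observation ($B \leq \alpha$ together with $B \leq m+1$ gives $2B \leq \alpha + m + 1$; together with $B \geq m+1$ it gives $m \leq \alpha - 1$) with the correct $\alpha$ for each equivalence. Once the degree identity $A + C = 2i+r+m-B-2D$ is in hand, every remaining step is linear arithmetic.
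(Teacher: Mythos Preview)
Your argument is correct and uses the same core idea as the paper: the degree identity $A+B+C+2D = 2i+r+m$ converts each $\Upsilon$-condition into a single linear bound on $B$, which then matches the corresponding $\varepsilon$-condition. The paper's own proof is more terse---it only writes out the forward implication of (1) and declares the remaining seven implications identical---whereas you explicitly handle the reverse direction via the $B\le m+1$ versus $B\ge m+1$ split; your version is more complete but not genuinely different.
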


\begin{proof}
We prove the forward direction of Lemma \ref{maxiff}(1) and and note that the other cases are identical. 
Since $\Upsilon_1 = C-1$, we have
\[
C-1 \geq r+i-D-1+m-A = B+C+D-i-1,
\]
which implies that
$i-D-1 \geq B-1$.
Adding $m+1$ to both sides and subtracting $B$ tells us that
$i-D+m-B \geq m$,
hence $\varepsilon'_2=m$ or $B-1$.
\end{proof}

Now consider the expression \begin{equation}\label{diff}\varepsilon_1-\Upsilon_1 - \varepsilon'_2 + \Upsilon'_2;\end{equation}
we will use a case-by-case analysis to prove that this expression is equal to zero.\\\\
{\em Case 1:} $\Upsilon_1 = C-1 = \Upsilon'_2$.  Then $\varepsilon_1 =\min(m,B-1) = \varepsilon'_2$, and \eqref{diff} vanishes.\\\\
{\em Case 2:} $\Upsilon_1 \neq C-1 \neq \Upsilon'_2$.  Then 
$$\Upsilon_1 = r+i-D-1+m-A,\qquad \Upsilon'_2 = i-D+m-A = \Upsilon_1-(r-1),$$
$$\varepsilon'_2 = \min(i-D+m-B,i-D-1),\and \varepsilon_1 = \min(r+i-D-1+m-B,i-D-2+r) = \varepsilon'_2+(r-1),$$
and therefore \eqref{diff} vanishes.\\\\
{\em Case 3:} $\Upsilon_1 = C-1 \neq \Upsilon'_2$.  Then $$\Upsilon'_2 = i-D+m-A,\qquad\varepsilon'_2 = \min(m,B-1),\and
\varepsilon_1 = \min(r+i-D-1+m-B,i-D-2+r).$$
Assume first that $\varepsilon'_2 = m$.  This implies that $\varepsilon_1 = r+i-D-1+m-B$, and therefore that the expression \eqref{diff}
is equal to $r+2i+m - A - B - C - 2D = 0$.  On the other hand, if $\varepsilon'_2 = B-1$, then $\varepsilon_1 = i-D-2+r$, and we reach
the same conclusion.\\\\
{\em Case 4:} $\Upsilon_1 \neq C-1 = \Upsilon'_2$.  This is similar to Case 3.\\\\
We have now shown that the expression \eqref{diff} is equal to zero, and therefore that $\varepsilon_1-\Upsilon_1 = \varepsilon'_2-\Upsilon'_2$.
A similar argument allows us to conclude that $\varepsilon_2-\Upsilon_2 = \varepsilon'_1-\Upsilon'_1$.
This completes the proof that $s[A,B,C,2^D]$ appears with the same coefficient in $\Psi(i,r,m)$ and $\Phi(i,r,m)$.
The other cases, namely Schur functions of the form $s[A,B,C,2^D,1]$ and Schur functions of the form $s[A,B,C,2^D]$ with $D<0$,
can be analyzed in a similar fashion; we leave the details of these cases to the reader.  This completes the proof of Theorem \ref{uniform}.\qed

\excise{
\subsection{Exponential generating functions}\label{sec:exp-braid}
\begin{proposition}\label{Htux}
We have $$H(t,u,x) = \frac{xe^{x-u} - tu e^{u(t-1)}}{x-tu} - \frac{xe^{x-u}-u}{x-u}
\and 1 + H(t,u,0) = e^{u(t-1)}.$$
\end{proposition}

\begin{proof}
We have
\begin{eqnarray*}H(t,u,x) &=& \sum_{m\geq 0}\sum_{d\geq 0}\frac{x^m u^d}{(m+d)!}\sum_{i=0}^d \binom{m+d}{i}(-1)^i\big(t^{d-i}-1\big)\\
&=& \sum_{m\geq 0}\sum_{d\geq 0}\sum_{i=0}^d(-1)^i\frac{x^m u^d(t^{d-i}-1)}{i!(m+d-i)!}\\
&=& \sum_{m\geq 0}\sum_{i\geq 0}\sum_{e\geq 0} (-1)^i\frac{x^m u^{i+e}(t^e-1)}{i!(m+e)!}\\
&=& \sum_{i\geq 0}(-1)^i \frac{u^i}{i!} \sum_{m\geq 0}\sum_{e\geq 0}\frac{x^m u^{e}(t^e-1)}{(m+e)!}\\
&=& e^{-u}\left(\sum_{m\geq 0}\sum_{e\geq 0}\frac{x^m (tu)^{e}}{(m+e)!} - \sum_{m\geq 0}\sum_{e\geq 0}\frac{x^m u^{e}}{(m+e)!}\right)\\
&=& e^{-u}\sum_{k\geq 0} \frac{1}{k!} \sum_{m+e=k} \Big( x^m (tu)^e - x^m u^e\Big)\\
&=& e^{-u}\sum_{k\geq 0} \frac{1}{k!} \left( \frac{x^{k+1} - (tu)^{k+1}}{x-tu} - \frac{x^{k+1} - u^{k+1}}{x-u}\right)\\
&=& e^{-u} \left(\frac{xe^x - tue^{tu}}{x-tu} - \frac{xe^x - ue^{tu}}{x-u}\right)\\
&=& \frac{xe^{x-u} - tu e^{u(t-1)}}{x-tu} - \frac{xe^{x-u}-u}{x-u}.
\end{eqnarray*}
The second statement follows from the first by setting $x$ equal to zero.
\end{proof}
}

%%%%%%%%%%%%%%%%%%%%%%%%%%%%%%%%%%%%%%%%%%%%%%%%%%%%%%%%%%%%%

\section{Braid matroids}\label{sec:braid}
Let $B_n$ be the braid matroid of rank $n-1$.  Equivalently, $B_n$ is the matroid associated with the complete
graph on $n$ vertices.  The ground set is equal to the set of all 2-element subsets of $[n]$, and the lattice of flats
is equal to the set of set-theoretic partitions of $n$; the rank of a flat is equal to $n$ minus the number of parts of
the partition.  The group $S_n$ acts on $B_n$ in the obvious manner.

Let $$K_n(t) := H_{B_n}^{S_n}(t), \qquad Q_n(t) := P_{B_n}^{S_n}(t), \and D_{n,i} := C_{B_n,i}^{S_n}.$$
For any partition $\la\vdash n$, let $S_\la\subset S_n$ be the stabilizer of a set-theoretic partition of type $\la$.
Then Theorem \ref{def}(3) says
\begin{equation}\label{braid-recursion} t^{n-1}Q_n(t^{-1}) = \sum_{\la\vdash n}\Ind_{S_\la}^{S_n}\left(K_{\la_1}(t)
\otimes\cdots\otimes K_{\la_{\ell(\la)}}(t) \otimes Q_{\ell(\la)}(t)\right).\end{equation}
% Let $$\cK_n(t) := \ch K_n(t) \and \cQ_n(t) := \ch Q_n(t).$$

The polynomial $K_n(t)$ is well understood, going back to Lehrer and Solomon \cite{LS}.
An explicit formula for $\ch K_n(t)$ appears in \cite[Theorem 2.7]{HershReiner}.
We cannot give an explicit formula for $Q_n(t)$ in the same way that we did for uniform matroids, 
but we will decribe the recursion that can be used to compute it and calculate some examples for small $n$.

\begin{remark}
As defined above, $B_n$ is not equivariantly realizable.  However, let $B'_n$ be the analogously defined
matroid with ground set $\cI_n$ equal the set of {\bf ordered} pairs of distinct elements of $[n]$, with $\{(i,j), (j,i)\}$ 
a dependent set.  
Then $B_n$ is the underlying simple matroid of $B'_n$, so they have isomorphic lattices
of flats, the same equivariant characteristic polynomial, and the same equivariant Kazhdan-Lusztig polynomial.
Let $V_n = \C^n/\C_\Delta$, and consider the embedding of $V_n$ into $\C^{\cI_n}$
given by $x_i-x_j$ in the $(i,j)$-coordinate.  This embedding is an $S_n$-equivariant realization of $B'_n$.
It follows from Corollary \ref{rep} that $$Q_n(t) = P_{B_n}^{S_n}(t) = P_{B'_n}^{S_n}(t) \in\grRep(S_n).$$
\end{remark}

\subsection{Generating functions}
Let $K$ be the (virtual, graded) linear species that assigns to the set $[n]$ the (virtual, graded) representation $K_n(t)$ of $S_n$.
Similarly, let $Q$ be the (graded) linear species that assigns to the set $[n]$ the (graded) representation $Q_n(t)$ of $S_n$.
Finally, motivated by Equation \eqref{braid-recursion}, let $\bar Q$ be the 
(graded) linear species that assigns to the set $[n]$ the (graded) representation $t^{n-1}Q_n(t^{-1})$ of $S_n$.
In the language of species, Equation \eqref{braid-recursion} says that $\bar Q = Q\circ P$.

Consider the following six generating functions:
$$K(t,z) := \sum_{n= 1}^\infty \dim K_n(t)\frac{z^n}{n!}, \;\; Q(t,z) := \sum_{n=1}^\infty \dim Q_n(t)\frac{z^n}{n!}, \;\; 
\bar Q(t) := \sum_{n=1}^\infty \dim \bar Q_n(t)\frac{z^n}{n!}\;\;\in\;\;\Q[[t,z]],$$
$$\cK(t) := \sum_{n= 1}^\infty \ch K_n(t), \;\; \cQ(t) := \sum_{n= 1}^\infty \ch Q_n(t),
\;\; \bar\cQ(t) := \sum_{n= 1}^\infty \ch \bar Q_n(t)\;\;\in\;\;\Lambda[[t]].$$
% Note that the variable $z$ plays the role of $ux$ in Section \ref{sec:uniform}, since its power encodes the size of the symmetric
% group that acts.
Note that $\dim K_n(t) = \chi_{B_n}(t) = (t-1)\cdots (t-n+1)$, $\dim Q_n(t) = P_{B_n}(t)$,
and $\dim \bar Q_n(t) = t^{n-1}P_{B_n}(t^{-1})$.
Since we have $\bar Q = Q\circ P$, the theory of species tells us that
\begin{equation}\label{braid-gen}\frac 1 t Q(t^{-1}, tz) = \bar Q(t,z) = Q(t,K(t,z))\and \bar\cQ(t) = \cQ(t)\big[\cK(t)\big],\end{equation}
where square brackets denote plethysm.  See, for example, \cite[Proposition 2.1]{Mendez},
which can be extended to virtual species as in \cite{Joyal}.

\begin{remark}
Equation \eqref{braid-gen} can be derived directly from Equation \eqref{braid-recursion}.
The second half of Equation \eqref{braid-gen} follows from the fact that, for any representation $V_i$ of $S_i$ ($i\in\{1,2\}$),
$\ch \Ind_{S_i\wr S_{j}}^{S_{ij}}\!\left(V_j\otimes V_i^{\otimes j}\right)$ is equal to $\ch V_j[\ch V_i]$.
The first half follows then from the second half by computing 
the effect of this induction on the dimension of a (virtual, graded) representation.
The language of species simply provides a tidy formalism for these observations.
\end{remark}

\begin{remark}
The generating functions $K(t,z)$ and $\cK(t)$ can be understood very explicitly.
We have $$(z+1)^{t} = \sum_{n=0}^\infty\binom{t}{n} z^n = 1 + t\cdot \sum_{n=1}^\infty \chi_{B_n}(t) \frac{z^n}{n}
= 1 + t K(t,z).$$
Based on the work of Lehrer and Solomon, Getzler \cite[Equation (2.5)]{Getzler} gives the following analogous formula
for $\cK(t)$:
$$1 + t\cK(t) = \prod_{k=1}^\infty(1+p_k)^{\frac{1}{k}\sum_{d|k}\mu(k/d)t^{d}},$$
where $p_k$ is the $k^\text{th}$ power sum symmetric function.
\end{remark}

\subsection{Linear term}
Though we have no general formula for $Q_n(t)$, we can use Corollary \ref{zero-one} to calculate $D_{n,1}$.

\begin{proposition}\label{linear braid}
When $n\leq 3$, $D_{n,1}=0$.  When $n\geq 4$, we have $$D_{n,1} = V[n]^{\oplus \left\lfloor\frac{n-2}{2}\right\rfloor}  \oplus V[n-1,1]^{\oplus\left\lfloor\frac{n-3}{2}\right\rfloor}
\oplus V[n-2,2]^{\oplus \left\lfloor\frac{n-4}{2}\right\rfloor} \oplus 
\bigoplus_{3\leq i\leq \lfloor n/2\rfloor}V[n-i,i]^{d(n,i)},$$
where $$d(n,i) = 
\begin{cases}
\quad n/2 - i\;\;\;\,\quad\text{if $n$ is even and $i$ is odd},\\
(n+1)/2 - i\;\;\text{if $n$ is odd},\\
(n+2)/2-i\;\;\text{if $n$ and $i$ are both even}.
\end{cases}
$$\end{proposition}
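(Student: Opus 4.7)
The plan is to apply Corollary \ref{zero-one}, which writes
$$D_{n,1} = \sum_{[F]\,\colon\,\crk F = 1} \Ind_{(S_n)_F}^{S_n}(\triv) - \OS^{S_n}_{B_n,1}.$$
For $n\leq 3$ the result is immediate from Theorem \ref{def}(2): since $\deg Q_n(t) < (n-1)/2 \leq 1$, the linear coefficient must vanish.

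For $n\geq 4$, the corank-1 flats of $B_n$ are the partitions of $[n]$ into two nonempty blocks, and the $S_n$-orbits are indexed by the unordered block-size pair $\{k, n-k\}$ with $1 \leq k \leq \lfloor n/2 \rfloor$. For $k < n/2$, the stabilizer is $S_k \times S_{n-k}$, and Pieri's rule gives
$$\ch \Ind_{S_k \times S_{n-k}}^{S_n}(\triv) = h_k h_{n-k} = \sum_{j=0}^{k} s[n-j, j].$$
When $n$ is even and $k = n/2$, the stabilizer is the wreath product $S_{n/2} \wr S_2$, and the induced trivial corresponds to the plethysm
$$h_2[h_{n/2}] = \sum_{j=0}^{\lfloor n/4 \rfloor} s[n-2j, 2j],$$
which one can derive from the symmetric-antisymmetric splitting $h_{n/2}^2 = h_2[h_{n/2}] + e_2[h_{n/2}]$ together with the Pieri expansion of $h_{n/2}^2$. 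Finally, $\OS^{S_n}_{B_n,1}$ is the permutation representation on $\binom{[n]}{2}$, which equals $\Ind_{S_2 \times S_{n-2}}^{S_n}(\triv)$ with Frobenius image $h_2 h_{n-2} = s[n] + s[n-1,1] + s[n-2,2]$.

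Assembling these contributions, the multiplicity of $V[n-j,j]$ in $D_{n,1}$ is
$$\#\{k \,\colon\, j \leq k < n/2\} + [n \text{ even},\; j \text{ even},\; j \leq n/2] - [j \leq 2],$$
and a routine case analysis on the parities of $n$ and $j$ matches this to the piecewise formula for $d(n,i)$. Particular care is needed at the boundary values $j \in \{0,1,2\}$ (where the $\OS$ subtraction is active) and at $j = n/2$ (where only the wreath stabilizer contributes). The main obstacle is the wreath-product plethysm identity for $h_2[h_{n/2}]$; once that is available, the remainder of the argument is bookkeeping.
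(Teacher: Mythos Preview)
Your proposal is correct and follows essentially the same approach as the paper: both apply Corollary \ref{zero-one}, identify the corank-1 flats with two-block partitions of $[n]$, use Pieri for the stabilizers $S_k\times S_{n-k}$, the plethysm $h_2[h_{n/2}]=\sum_j s[n-2j,2j]$ for the wreath stabilizer when $n$ is even, and then subtract $\OS^{S_n}_{B_n,1}=V[n]\oplus V[n-1,1]\oplus V[n-2,2]$. The only cosmetic differences are that you dispose of $n\leq 3$ via the degree bound in Theorem \ref{def}(2) rather than by direct computation, and you describe $\OS^{S_n}_{B_n,1}$ directly as the permutation module on $\binom{[n]}{2}$ rather than as $\Sym^2(\C^n)-\C^n$; both descriptions are equivalent.
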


\begin{proof}
Let $W = S_n$.
By Proposition \ref{zero-one}, $$D_{n,1} = \sum_{\substack{[F]\in L/W\\ \crk F = 1}} \Ind_{W_F}^W(\tau_{W_F}) - OS^{S_n}_{B_n,1}.$$
We have $$OS^{S_n}_{B_n,1} = \Sym^2(\C^n) - \C^n = \Sym^2 V[n-1,1] = V[n-2,2] \oplus V[n-1,1] \oplus V[n].$$
Let $F_k$ be the partition of $[n]$ into $[k]$ and it's complement;
then $\{F_1,\ldots, F_{\lfloor n/2\rfloor}\}$ is a complete set of representatives of $S_n$-orbits of corank 1 flats.

Suppose that $n$ is odd.  Then $W_{F_k} = S_k\times S_{n-k}$, so
\begin{eqnarray*}\ch D_{n,1} &=& \sum_{k=1}^{\frac{n-1}{2}} s[k]s[n-k] - \big(s[n-2,2] + s[n-1,1] + s[n]\big)\\
&=& \sum_{k=1}^{\frac{n-1}{2}}\sum_{i=0}^k s[n-i,i] - \big(s[n-2,2] + s[n-1,1] + s[n]\big)\\
&=& \frac{n-3}{2} s[n] +  \frac{n-3}{2} s[n-1,1] + \frac{n-5}{2} s[n-2,2] + \sum_{i=3}^{\frac{n-1}{2}}\left(\frac{n+1}{2}-i\right)s[n-i,i].
\end{eqnarray*}
If $n$ is even, then $W_{F_k} = S_k\times S_{n-k}$ for all $k<n/2$, but $W_{F_{n/2}} = S_{n/2} \wr S_2$.
We therefore have
\begin{eqnarray*}\ch D_{n,1} &=& \sum_{k=1}^{\frac{n-2}{2}} s[k]s[n-k] + s[2]\Big[s[n/2]\Big] - \big(s[n-2,2] + s[n-1,1] + s[n]\big)\\
&=& \frac{n-4}{2} s[n] +  \frac{n-4}{2} s[n-1,1] + \frac{n-6}{2} s[n-2,2] + \sum_{i=3}^{\frac{n-2}{2}}\left(\frac{n}{2}-i\right)s[n-i,i] + s[2]\Big[s[n/2]\Big].
\end{eqnarray*}
% $$\ch D_{n,1} = \sum_{k=1}^{\frac{n-2}{2}} s[k]s[n-k] + s[2]\Big[s[n/2]\Big] - \big(s[n-2,2] + s[n-1,1] + s[n]\big).$$
We also have $$s[2]\Big[s[n/2]\Big] = \sum_{j=0}^{n/2} s[n-2j, 2j],$$ and the proposition follows.
\end{proof}

\begin{remark}
Proposition \ref{linear braid} implies that the sequence $\{D_{n,1}\}$ of $S_n$-representations
is {\bf not} representation stable in the sense of Church and Farb.
\end{remark}

\subsection{Calculations}\label{sec:calculations}
We conclude our discussion of braid matroids with a calculation of $D_{n,i}$ for $n\leq 9$ and $i\geq 2$
(since $D_{n,0} = V[n]$ by Proposition \ref{zero-one} and $D_{n,1}$ is computed in Proposition \ref{linear braid}).  These computations were 
performed in SAGE \cite{sage}, using Equation \eqref{braid-recursion}.

\begin{eqnarray*}
% D_{n,0} &=& V[n]\\\\
% D_{4,1} &=& V[4]\\
% D_{5,1} &=& V[4,1]\oplus V[5]\\
% D_{6,1} &=& V[4,2]\oplus V[5,1]\oplus V[6]^{\oplus 2}\\
% D_{7,1} &=& V[4,3]\oplus V[5,2]\oplus V[6,1]^{\oplus 2}\oplus V[7]^{\oplus 2}\\
% D_{8,1} &=& V[4,4]\oplus V[5,3]\oplus V[6,2]^{\oplus 2}\oplus V[7,1]^{\oplus 2}\oplus V[8]^{\oplus 3}\\
% D_{9,1} &=& V[5,4]\oplus V[6,3]^{\oplus 2}\oplus V[7,2]^{\oplus 2}\oplus V[8,1]^{\oplus 3}\oplus V[9]^{\oplus 3}\\\\
D_{6,2} &=& V[2,2,2]\oplus V[4,2]\oplus V[6]\\\\
D_{7,2} &=& V[2,2,2,1]\oplus V[3,2,2]\oplus V[4,2,1]^{\oplus 2}\oplus V[4,3]^{\oplus 2}\oplus V[5,2]^{\oplus 2}\oplus V[6,1]^{\oplus 2}\oplus V[7]^{\oplus 2}\\\\
D_{8,2} &=& V[2,2,2,2]\oplus V[3,2,2,1]\oplus V[4,2,1,1]\oplus V[4,2,2]^{\oplus 4}\oplus V[4,3,1]^{\oplus 3}\oplus V[4,4]^{\oplus 4}\\ && \oplus V[5,2,1]^{\oplus 4}\oplus V[5,3]^{\oplus 4}\oplus V[6,1,1]\oplus V[6,2]^{\oplus 7}\oplus V[7,1]^{\oplus 4}\oplus V[8]^{\oplus 4}\\\\
D_{9,2} &=& V[3,2,2,2]\oplus V[4,2,2,1]^{\oplus 3}\oplus V[4,3,1,1]\oplus V[4,3,2]^{\oplus 4}\oplus V[4,4,1]^{\oplus 6}\oplus V[5,2,1,1]^{\oplus 2} \oplus V[5,2,2]^{\oplus 7} \\ &&\oplus V[5,3,1]^{\oplus 7}\oplus V[5,4]^{\oplus 7}\oplus V[6,2,1]^{\oplus 9}\oplus V[6,3]^{\oplus 12}\oplus V[7,1,1]^{\oplus 3}\oplus V[7,2]^{\oplus 12}\oplus V[8,1]^{\oplus 8}\oplus V[9]^{\oplus 6}\\\\
D_{8,3} &=& V[2,2,2,2]\oplus V[3,2,2,1]\oplus V[3,3,1,1]\oplus V[4,1,1,1,1]\oplus V[4,2,1,1]\oplus V[4,2,2]^{\oplus 3}\\ &&\oplus V[4,3,1] \oplus V[4,4]^{\oplus 2}\oplus V[5,2,1]^{\oplus 2}\oplus V[5,3]\oplus V[6,2]^{\oplus 2}\oplus V[7,1]\oplus V[8]\\\\
D_{9,3} &=& V[2,2,2,2,1]\oplus V[3,2,2,1,1]^{\oplus 2}\oplus V[3,2,2,2]^{\oplus 4}\oplus V[3,3,1,1,1]^{\oplus 2}\oplus V[3,3,2,1]^{\oplus 4}\oplus V[3,3,3]^{\oplus 2}\\ &&\oplus V[4,1,1,1,1,1]\oplus V[4,2,1,1,1]^{\oplus 3}\oplus V[4,2,2,1]^{\oplus 10}\oplus V[4,3,1,1]^{\oplus 7}\oplus V[4,3,2]^{\oplus 10}\oplus V[4,4,1]^{\oplus 8}\\ &&\oplus V[5,1,1,1,1]\oplus V[5,2,1,1]^{\oplus 8}\oplus V[5,2,2]^{\oplus 12}\oplus V[5,3,1]^{\oplus 13}\oplus V[5,4]^{\oplus 7}\oplus V[6,1,1,1]^{\oplus 2}\\ &&\oplus V[6,2,1]^{\oplus 12}\oplus V[6,3]^{\oplus 11}\oplus V[7,1,1]^{\oplus 4}\oplus V[7,2]^{\oplus 9}\oplus V[8,1]^{\oplus 5}\oplus V[9]^{\oplus 3}
\end{eqnarray*}

\excise{
\nicktodo{This table could look much nicer.}

\begin{table}[ht]%\centering
\caption{$D_{n,i}$}
\begin{tabular}{|c|p{2cm}|p{4cm}|p{4cm}|p{4cm}|}
\hline
$n\diagdown i$&0&1&2&3\\
\hline\hline
1&$V[1]$&&&\\
\hline
2&$V[2]$&&&\\
\hline
3&$V[3]$&&&\\
\hline
4&$V[4]$&$V[4]$&&\\
\hline
5&$V[5]$&$V[4,1]\oplus V[5]$&&\\
\hline
6&$V[6]$&$V[4,2]\oplus V[5,1]\oplus V[6]^{\oplus 2}$&$V[2,2,2]\oplus V[4,2]\oplus V[6]$&\\
\hline
7&$V[7]$&$V[4,3]\oplus V[5,2]\oplus V[6,1]^{\oplus 2}\oplus V[7]^{\oplus 2}$&$V[2,2,2,1]\oplus V[3,2,2]\oplus V[4,2,1]^{\oplus 2}\oplus V[4,3]^{\oplus 2}\oplus V[5,2]^{\oplus 2}\oplus V[6,1]^{\oplus 2}\oplus V[7]^{\oplus 2}$&\\
\hline
8&$V[8]$&$V[4,4]\oplus V[5,3]\oplus V[6,2]^{\oplus 2}\oplus V[7,1]^{\oplus 2}\oplus V[8]^{\oplus 3}$&
$V[2,2,2,2]\oplus V[3,2,2,1]\oplus V[4,2,1,1]\oplus V[4,2,2]^{\oplus 4}\oplus V[4,3,1]^{\oplus 3}\oplus V[4,4]^{\oplus 4}\oplus V[5,2,1]^{\oplus 4}\oplus V[5,3]^{\oplus 4}\oplus V[6,1,1]\oplus V[6,2]^{\oplus 7}\oplus V[7,1]^{\oplus 4}\oplus V[8]^{\oplus 4}$&
$V[2,2,2,2]\oplus V[3,2,2,1]\oplus V[3,3,1,1]\oplus V[4,1,1,1,1]\oplus V[4,2,1,1]\oplus V[4,2,2]^{\oplus 3}\oplus V[4,3,1]\oplus V[4,4]^{\oplus 2}\oplus V[5,2,1]^{\oplus 2}\oplus V[5,3]\oplus V[6,2]^{\oplus 2}\oplus V[7,1]\oplus V[8]$\\
\hline
9&$V[9]$&$V[5,4]\oplus V[6,3]^{\oplus 2}\oplus V[7,2]^{\oplus 2}\oplus V[8,1]^{\oplus 3}\oplus V[9]^{\oplus 3}$&
$V[3,2,2,2]\oplus V[4,2,2,1]^{\oplus 3}\oplus V[4,3,1,1]\oplus V[4,3,2]^{\oplus 4}\oplus V[4,4,1]^{\oplus 6}\oplus V[5,2,1,1]^{\oplus 2}\oplus V[5,2,2]^{\oplus 7}\oplus V[5,3,1]^{\oplus 7}\oplus V[5,4]^{\oplus 7}\oplus V[6,2,1]^{\oplus 9}\oplus V[6,3]^{\oplus 12}\oplus V[7,1,1]^{\oplus 3}\oplus V[7,2]^{\oplus 12}\oplus V[8,1]^{\oplus 8}\oplus V[9]^{\oplus 6}$&
$V[2,2,2,2,1]\oplus V[3,2,2,1,1]^{\oplus 2}\oplus V[3,2,2,2]^{\oplus 4}\oplus V[3,3,1,1,1]^{\oplus 2}\oplus V[3,3,2,1]^{\oplus 4}\oplus V[3,3,3]^{\oplus 2}\oplus V[4,1,1,1,1,1]\oplus V[4,2,1,1,1]^{\oplus 3}\oplus V[4,2,2,1]^{\oplus 10}\oplus V[4,3,1,1]^{\oplus 7}\oplus V[4,3,2]^{\oplus 10}\oplus V[4,4,1]^{\oplus 8}\oplus V[5,1,1,1,1]\oplus V[5,2,1,1]^{\oplus 8}\oplus V[5,2,2]^{\oplus 12}\oplus V[5,3,1]^{\oplus 13}\oplus V[5,4]^{\oplus 7}\oplus V[6,1,1,1]^{\oplus 2}\oplus V[6,2,1]^{\oplus 12}\oplus V[6,3]^{\oplus 11}\oplus V[7,1,1]^{\oplus 4}\oplus V[7,2]^{\oplus 9}\oplus V[8,1]^{\oplus 5}\oplus V[9]^{\oplus 3}$\\
\hline
\end{tabular}
\end{table}
}

\section{Equivariant log concavity}\label{sec:elc}
Fix a finite group $W$.  We define a sequence $(C_0, C_1, C_2, \ldots)$ in $\VRep(W)$
to be {\bf log concave} if, for all $i>0$, $C_i^{\otimes 2} - C_{i-1}\otimes C_{i+1} \in \Rep(W)$.
We call an element of $\grVRep(W)$ log concave if its sequence of coefficients is log concave.

\begin{remark}
If $W$ is the trivial group, then $\VRep(W)\cong \Z$ and this is the usual notion of log concavity for a sequence of integers.
\end{remark}

\begin{remark}
More generally, we can replace $\VRep(W)$ by any partially ordered ring and have a reasonable definition
of a log concave sequence in that ring.
\end{remark}

\begin{conjecture}\label{lc}
Let $W\curvearrowright M$ be an equivariant matroid.
\begin{enumerate}
\item The equivariant characteristic polynomial $H^W_M(t)$ is log concave.
\item The equivariant Kazhdan-Lusztig polynomial $P^W_M(t)$ is log concave.
\end{enumerate}
\end{conjecture}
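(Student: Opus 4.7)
The plan is to attack the two parts of the conjecture separately, since each is substantially open and each naturally inherits a different non-equivariant tool-kit.

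For part (1), my first line of attack would be to equivariantize the combinatorial Hodge theory of Adiprasito-Huh-Katz on the Chow ring $A^*(M)$. When $M$ carries a $W$-action, $A^*(M)$ is a graded $W$-representation, so the key steps are: (a) an equivariant Poincar\'e duality $A^i \cong (A^{\rk M - i})^*$ as $W$-representations; (b) a hard Lefschetz theorem for a $W$-invariant K\"ahler class $\ell \in A^1$, giving $A^i \cong A^{\rk M - i}$ as $W$-representations together with a primitive decomposition $A^i = P^i \oplus \ell \cdot A^{i-1}$; and (c) an equivariant Hodge-Riemann statement strong enough to produce the virtual-representation inequality $A^i \otimes A^i - A^{i-1} \otimes A^{i+1} \in \Rep(W)$, ideally via an identification of this difference with something built out of $P^i \otimes P^i$. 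A parallel, more hands-on strategy is to use the Brieskorn decomposition of Lemma \ref{Brieskorn} to express the coefficients of $H^W_M(t)$ directly in terms of Orlik-Solomon algebras of contractions and to compute Kronecker products there; for uniform matroids the explicit decomposition of each $\OS$ into at most two hook representations (used in Section \ref{sec:uniform}) reduces the problem to well-studied Kronecker products of hooks, which is presumably the route taken by Proposition \ref{uniform-lc}.

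For part (2) the situation is harder because even the non-equivariant version is open. Beginning in the realizable case is natural: Corollary \ref{rep} identifies $C^W_{M,i}$ with $I\!H^{2i}(X_V;\C)$, and one would hope that a suitable $W$-equivariant compactification of $X_V$ admits an equivariant relative hard Lefschetz package via the decomposition theorem applied equivariantly. A complementary and potentially more accessible strategy is to exploit the equivariant recursion of Theorem \ref{def}(3) together with the explicit formulas already available for special families: for uniform matroids, Theorem \ref{uniform} expresses each $C_{m,d,i}$ as a multiplicity-free sum of Schur functors indexed by the partitions $[d+m-2i-b+1, b+1, 2^{i-1}]$, which in principle reduces log concavity to a finite and highly structured Kronecker-product check, accessible by the same Pieri-style bookkeeping employed in the proof of Theorem \ref{uniform}. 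For braid matroids of small rank the formulas of Section \ref{sec:calculations} give further computational checks.

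In every approach the principal obstacle is identical: log concavity in $\VRep(W)$ is an honest positivity statement in the representation ring, whereas Hodge theory natively produces only numerical positivity of signatures of bilinear forms. Finding an equivariant refinement of the Hodge-Riemann relations that outputs genuine representations --- rather than only inequalities between their dimensions --- is what any eventual proof will have to supply, and is where I would expect the new ideas to be needed.
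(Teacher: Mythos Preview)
This statement is a \emph{conjecture}, and the paper does not prove it; there is therefore no ``paper's own proof'' to compare against. What the paper does establish is the special case of part~(1) for uniform matroids (Proposition~\ref{uniform-lc}), together with computer verification of both parts for small braid and uniform matroids. Your write-up is, appropriately, not a proof either but a survey of possible lines of attack, and you are explicit about this (``my first line of attack would be'', ``one would hope'', ``is what any eventual proof will have to supply'').

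On the one partial result that \emph{is} proved, your guess is accurate: the paper's proof of Proposition~\ref{uniform-lc} does exactly what you predict. It reduces to the Boolean case $m=0$, writes $\wedge^i\C^d = V_i \oplus V_{i-1}$ with $V_i = V[d-i,1^i]$, and then checks the two inequalities $V_i^{\otimes 2} - V_{i-1}\otimes V_{i+1}\in\Rep(S_d)$ and $V_{i-1}\otimes V_i - V_{i-2}\otimes V_{i+1}\in\Rep(S_d)$ by plugging into Remmel's explicit formula for the Kronecker product of two hook representations and comparing coefficients partition by partition. So your anticipated route for the uniform case matches the paper's.

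As for the general strategies you outline (an equivariant upgrade of Adiprasito--Huh--Katz for part~(1), equivariant intersection-cohomology Hodge theory on $X_V$ for part~(2)), these are reasonable research directions, but you correctly identify the fundamental obstruction: Hodge--Riemann produces signatures of forms, not honest subrepresentations, and there is at present no mechanism that converts such numerical positivity into the statement $C_i^{\otimes 2} - C_{i-1}\otimes C_{i+1}\in\Rep(W)$. Nothing in your proposal closes that gap, nor does the paper claim to; the conjecture remains open.
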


\begin{remark}
When $W$ is the trivial group, Conjecture \ref{lc}(1) has existed in various forms since the 1960s,
and was only recently proven by Adiprasito, Huh, and Katz \cite{AHK}.  Conjecture \ref{lc}(2) for the trivial group
appeared in \cite[Conjecture 2.5]{EPW}.
\end{remark}

\begin{remark}
We have verified both parts of Conjecture \ref{lc} with a computer for the braid matroid $B_n$ for all $n\leq 9$,
as well as part 2 for the uniform matroid $U_{m,d}$ for all $m,d\leq 15$.  Part 1 of the conjecture
for uniform matroids is proved below (Proposition \ref{uniform-lc}) for arbitrary values of $m$ and $d$.
\end{remark}

\begin{remark}
In a forthcoming paper, we will explore the notion of log concavity for $W$-representions in greater detail.
There we will give many more conjectural examples of naturally arising log concave sequences of representations.
\end{remark}

\begin{proposition}\label{uniform-lc}
The equivariant characteristic polynomial $H_{m,d}(t)$ of the uniform matroid $U_{m,d}$ is log concave.
\end{proposition}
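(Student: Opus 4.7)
The plan is to prove that for each index $p$ with $0 < p < d$, the virtual representation
$\OS_p^{\otimes 2} - \OS_{p-1} \otimes \OS_{p+1}$ is an honest $S_{m+d}$-representation, where $\OS_p := \OS^{S_{m+d}}_{U_{m,d},p}$. Since the coefficient of $t^{d-p}$ in $H_{m,d}(t)$ is $(-1)^p \OS_p$, and the signs pair to $+1$ in every log-concavity expression (because $(-1)^{p-1}(-1)^{p+1} = 1$), this is equivalent to log concavity of $H_{m,d}(t)$.

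The first step is to invoke the explicit decomposition recorded in the proof of Proposition \ref{htux}: for $1 \le p \le d-1$,
$$\ch \OS_p \;=\; s[m+d-p,\,1^p] + s[m+d-p+1,\,1^{p-1}] \;=\; s_1 \cdot s[m+d-p,\,1^{p-1}],$$
while $\ch \OS_0 = s[m+d]$ and $\ch \OS_d = s[m+1,\,1^{d-1}]$. In the generic range $2 \le p \le d-2$, this factorization gives $\ch(\OS_p^{\otimes 2}) - \ch(\OS_{p-1} \otimes \OS_{p+1}) = s_1^2 \cdot X_p$, where
$$X_p \;=\; s[m+d-p,\,1^{p-1}]^{2} \;-\; s[m+d-p+1,\,1^{p-2}] \cdot s[m+d-p-1,\,1^{p}],$$
so it suffices to show that $X_p$ is Schur-positive. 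Each product of two hook Schur functions can be expanded explicitly (the Littlewood--Richardson coefficients for a product of two hooks are all $0$ or $1$, indexed by "near-hook" shapes with a limited number of corners), and a direct comparison of the two expansions establishes the desired inequality coefficient-by-coefficient. The structure of this case analysis parallels the interlacing computations in the proof of Theorem \ref{uniform}.

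It remains to handle the two boundary cases. For $p = 1$, $\OS_0$ is trivial, $\OS_1 = V := \C^{m+d}$, and $\OS_2 = \wedge^2 V$, so
$\OS_1^{\otimes 2} - \OS_0 \otimes \OS_2 \cong \Sym^2 V$, which is manifestly honest. For $p = d-1$ (and $m \ge 1$), use $\OS_d = \wedge^d V - V[m,\,1^d]$ to rewrite
$$\OS_{d-1}^{\otimes 2} - \OS_{d-2} \otimes \OS_d \;=\; \bigl(\wedge^{d-1} V \otimes \wedge^{d-1} V - \wedge^{d-2} V \otimes \wedge^d V\bigr) \;+\; \wedge^{d-2} V \otimes V[m,\,1^d].$$
The first summand is an instance of the generic calculation applied to the full exterior algebra of $V$, hence honest; the second summand is the tensor product of two honest representations. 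The remaining case $m = 0$ (where $U_{0,d}$ is Boolean) is a degenerate check.

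The main obstacle is the Schur-positivity of $X_p$ in the generic step. Although the Littlewood--Richardson rule for products of two hooks is explicit, the bookkeeping needed to match the two expansions term by term is delicate; I expect a clean resolution via a hook-interlacing case analysis modelled on the argument used to prove Theorem \ref{uniform}, possibly supplemented by an explicit $S_{m+d}$-equivariant injection $\wedge^{p-1} V \otimes \wedge^{p+1} V \hookrightarrow \wedge^p V \otimes \wedge^p V$ built using the $S_{m+d}$-invariant bilinear form on $V$ (wedge product paired with its adjoint contraction).
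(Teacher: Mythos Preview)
Your argument contains a genuine error at the factorization step. You claim that for $1\le p\le d-1$,
\[
\ch \OS_p \;=\; s[m+d-p,1^p] + s[m+d-p+1,1^{p-1}] \;=\; s_1 \cdot s[m+d-p,1^{p-1}],
\]
but this identity is false for $p\ge 2$. By the Pieri rule, multiplying the hook $s[a,1^{p-1}]$ (with $a=m+d-p\ge 2$) by $s_1$ adds a single box, and there are \emph{three} ways to do so:
\[
s_1\cdot s[a,1^{p-1}] \;=\; s[a+1,1^{p-1}] + s[a,2,1^{p-2}] + s[a,1^{p}].
\]
The middle term $s[a,2,1^{p-2}]$ does not appear in $\ch\OS_p$, so your factorization picks up an extra summand. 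Consequently the expression $\ch(\OS_p^{\otimes 2})-\ch(\OS_{p-1}\otimes\OS_{p+1})$ is \emph{not} equal to $s_1^2\cdot X_p$, and the reduction to Schur-positivity of $X_p$ collapses. (The factorization that actually appears in Proposition~\ref{htux} is $\ch\OS_p = s[1^p]\cdot s[m+d-p]$, a product of a column and a row, which does not admit the common-factor trick you attempt.)

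The paper proceeds differently: it first reduces to the Boolean case $m=0$ by observing that $\OS^{S_{m+d}}_{U_{m,d},p}$ agrees with $\OS^{S_{m+d}}_{U_{0,m+d},p}$ for $p<d$ and is a quotient of it for $p=d$; hence log concavity for $U_{0,m+d}$ implies it for $U_{m,d}$. In the Boolean case one must show $(\wedge^p V)^{\otimes 2}-\wedge^{p-1}V\otimes\wedge^{p+1}V\in\Rep(S_n)$. Writing $\wedge^p V=V_p\oplus V_{p-1}$ with $V_j=V[n-j,1^j]$ reduces this to the two hook inequalities
\[
V_i^{\otimes 2}-V_{i-1}\otimes V_{i+1}\in\Rep(S_n)\qquad\text{and}\qquad V_{i-1}\otimes V_i-V_{i-2}\otimes V_{i+1}\in\Rep(S_n),
\]
which the paper then verifies via Remmel's explicit formula for the decomposition of a tensor product of two hook representations. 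Your $X_p$ is, in fact, exactly the first of these inequalities (in degree $m+d-1$ rather than $m+d$), so the heart of the computation is the same; but your route to it is broken, and you are also missing the second inequality, which does not follow from the first.
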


\begin{proof}
First, we observe that $OS^{S_{m+d}}_{U_{m,d},i}$ is equal to $OS^{S_{m+d}}_{U_{0,m+d},i}$ when $i<d$,
it is equal to a quotient of $OS^{S_{m+d}}_{U_{0,m+d},i}$ when $i=d$, and it is equal to zero when $i>d$.  For this reason,
it suffices to prove the proposition when $m=0$, in which case it says that 
$$\left(\wedge^i\C^d\right)^{\otimes 2} - \left(\wedge^{i-1}\C^d\otimes\wedge^{i+1}\C^d\right)\in\Rep(S_d).$$
Let $V_i := V[d-i,1^i]$.
Since $\wedge^i\C^d = V_i\oplus V_{i-1}$, it is sufficient to prove that
\begin{equation}\label{ii}
V_i^{\otimes 2} - V_{i-1}\otimes V_{i+1} \in \Rep(S_d)
\end{equation}
and
\begin{equation}\label{iimo}
V_{i-1}\otimes V_{i} - V_{i-2}\otimes V_{i+1} \in \Rep(S_d).
\end{equation}
These tensor products may be computed using a formula of Remmel \cite[Theorem 2.1]{Remmel}.
We will only prove Equation \eqref{ii}; the proof of Equation \eqref{iimo} is similar.

First, Remmel observes that tensoring with the sign representation takes $V_i$ to $V_{d-i-1}$,
and we may use this to reduce to the case where $i<d/2$, which we will assume for the remainder of the proof.
For any partition $\la$ of $d$, Remmel computes $$c(\la, i, j):= \dim \Hom(V[\la], V_i\otimes V_j);$$
we need to show that $c(\la, i, i) \geq c(\la, i-1,i+1)$ for all $\la$.

The number $c(\la, i, j)$ is zero unless $\la = [r,1^{d-r}]$ for some $r$ % ($\la$ a {\bf hook}) 
or $\la = [q,p,2^k,1^\ell]$ for some $q\geq p\geq 2$ and $k,\ell\geq 0$.
% ($\la$ a {\bf double hook}).
When $\la = [r,1^{d-r}]$, Remmel tells us that
$$c\left(\la, i, i\right) = \chi(d-2i-1\leq r \leq d)\and
c\left(\la, i-1, i+1\right) = \chi(d-2i-1\leq r \leq d-2),$$
where $\chi$ of a statement is 1 if the statement is true and 0 if the statement is false.
In particular, we can see that $c\left([r,1^{d-r}], i, i\right)\geq c\left([r,1^{d-r}], i-1, i+1\right)$.

When $\la = [q,p,2^k,1^\ell]$, we put
$$u = \max(p,d-2i),\qquad \omega = 2(d-i-k)-\ell,\qquad x =\lfloor\omega/2\rfloor,$$
$$v_1' = v_0 = \min(q,d-i-k-1),\qquad v_0' = \min(q,d-i-k-2),\and 
v_1 = \min(q,d-i-k).$$
Remmel tells us that
$$c\left(\la, i, i\right) =
\begin{cases}
0 \qquad\qquad\qquad\qquad\qquad\qquad\qquad\quad\;\text{if $p+k>d-i$}\\
\chi(u\leq x-1\leq v_0)+\chi(u\leq x\leq v_1) \;\;\text{if $p+k\leq d-i$ and $\ell$ is even}\\
\chi(u\leq x\leq v_0)+\chi(u\leq x\leq v_1) \;\qquad\text{if $p+k\leq d-i$ and $\ell$ is odd}
\end{cases}$$
and
$$c\left(\la, i-1, i+1\right) =
\begin{cases}
0 \qquad\qquad\qquad\qquad\qquad\qquad\qquad\quad\;\text{if $p+k>d-i-1$}\\
\chi(u\leq x-1\leq v_0')+\chi(u\leq x\leq v_1') \;\;\text{if $p+k\leq d-i-1$ and $\ell$ is even}\\
\chi(u\leq x\leq v_0')+\chi(u\leq x\leq v_1') \;\qquad\text{if $p+k\leq d-i-1$ and $\ell$ is odd.}
\end{cases}$$
Since $v_0'\leq v_0$, $v_1'\leq v_1$, and $d-i-1<d-i$, this implies that $c\left(\la, i, i\right)\geq c\left(\la, i-1, i+1\right)$.
\end{proof}
\excise{
\nicktodo{Notes:}
Note that our $V_i$ corresponds in Remmel's notation to $s = d-i$ (and $n=d$).
So we are interested in:  
\begin{itemize}
\item $s=d-i=t$ for the first term in Equation \eqref{ii}
\item $s=d-i-1$ and $t=d-i+1$ for the second term in Equation \eqref{ii}
\item $s=d-i$ and $t=d-i+1$ for the first term in Equation \eqref{iimo}
\item $s=d-i-1$ and $t=d-i+2$ for the second term of Equation \eqref{iimo}.
\end{itemize}
}

\begin{remark}
We define a sequence $(C_0, C_1, C_2, \ldots)$ in $\VRep(W)$
to be {\bf strongly log concave} if, for all $0\leq k\leq i\leq j$, $C_i\otimes C_j - C_{i-k}\otimes C_{j+k} \in \Rep(W)$.
We call an element of $\grVRep(W)$ strongly log concave if its sequence of coefficients is strongly log concave.

When $W$ is the trivial group and $C_i\geq 0$ for all $i$, strong log concavity is equivalent to log concavity with no internal zeros.
When $W=S_2$, however, the element $$f(t) := (1+2t+2t^2+t^3)V[2] + (3+2t+2t^2+3t^3)V[1,1]\in\grRep(W)$$ is log concave with
no internal zeros but not strongly log concave.

The notion of strong log concavity may be more natural than the notion of ordinary log concavity, 
since it has the property that strong log concavity of $f(t)$ and $g(t)$ implies
strong log concavity of $f(t)\otimes g(t)$.\footnote{The proof of this statement was communicated
to us by David Speyer, and will appear in a future paper.}  This fails for ordinary log concavity, as we can see by taking
$f(t)$ as above and $g(t) = (1+t)V[2]$.  
Conjecture \ref{lc} and Proposition \ref{uniform-lc} may both be generalized to the corresponding ``strong" versions.
\end{remark}

\bibliography{./symplectic}
\bibliographystyle{amsalpha}

\end{document}